\numberwithin{equation}{section}
\newtheorem{proposition}{Proposition}[section]
\newtheorem{lemma}[proposition]{Lemma}
\newtheorem{theorem}[proposition]{Theorem}
\newtheorem{corollary}[proposition]{Corollary}
\theoremstyle{definition}
\newtheorem{remark}[proposition]{Remark}
\newtheorem{definition}[proposition]{Definition}
\DeclareMathOperator{\GL}{GL}
\DeclareMathOperator{\Spec}{Spec}
\renewcommand{\epsilon}{\varepsilon}
\renewcommand{\phi}{\varphi}
\newcommand\mi{^{-1}}
\newcommand\na{{\mathrm{NA}}}
\newcommand\rea[1]{\textcolor{orange}{#1}}
\newcommand\bbc{\mathbb{C}}
\newcommand\bbd{\mathbb{D}}
\newcommand\bbp{\mathbb{P}}
\newcommand\bbq{\mathbb{Q}}
\newcommand\bbr{\mathbb{R}}
\newcommand\cC{\mathcal{C}}
\newcommand\cD{\mathcal{D}}
\newcommand\cH{\mathcal{H}}
\newcommand\cL{\mathcal{L}}
\newcommand\cM{\mathcal{M}}
\newcommand\cO{\mathcal{O}}
\newcommand\cX{\mathcal{X}}
\newcommand{\norm}[1][\cdot]{\|#1\|}
\newcommand\field{{\mathrm{k}}}
\newcommand\ring{{\mathrm{k}^\circ}}
\newcommand\reldim{\mathrm{rel.dim}}
\newcommand\bigbbp{{\underline{\bbp}}}
\title{A Hilbert--Mumford criterion for generalised Monge--Ampère equations.}
\author{Rémi Reboulet}
\begin{document}

\maketitle

\begin{abstract}
    We give a new numerical criterion in the spirit of GIT for existence of solutions to inverse Hessian equations, including in particular the J-equation. Our criterion is formulated in terms of stability of pairs in the sense of Paul. To that end, we build on previous work of the author with Dervan, and generalise a result of Zhang, proving isometry between generalised Chow line bundles and mixed Deligne pairings.
\end{abstract}

\tableofcontents

\section*{Introduction}

The resolution of the Calabi conjecture by Yau \cite{yau:calabi} marks the beginning of a long series of works on generalised Monge--Ampère-type equations on complex projective (or Kähler) manifolds. Beyond the Calabi problem, the next simplest example of a Monge--Ampère equation involves an additional fixed Kähler form $\chi$, and asks for a solution $\omega$ in a given Kähler class to
$$n\omega^{n-1}\wedge\chi=C\omega^n,$$
with $C$ an adequate topological constant ensuring the equation to be numerically well-defined. This is known as the \textit{J-equation}, appearing already in \cite{don:momentmaps,xxchen:imrn} and which has first been solved in full generality by Gao Chen \cite{gaochen}. Interestingly, existence of a solution turns out to be equivalent to a numerical criterion generalising the classical result of Nakai and Moishezon for ample line bundles, suggesting that solvability of generalised Monge--Ampère equations is closely related new and interesting positivity conditions.

Since then (and even before Gao Chen's result), various works have focused on more general equations of mixed Monge--Ampère type \cite{colsze:jflow,song:nakai,datarpingali,fangma}, as well as on the deformed Hermitian Yang--Mills equation \cite{collinsyau,gaochen,chulee,chulee2,chulee3}. It is furthermore speculated that solving equations of this type in higher rank might help in solving the Hodge conjecture, see \cite{gaochen:hodge}.

We will take our interest here to a class of equations occasionally referred to as \textit{inverse $\sigma_k$-equations}, introduced by Fang--Lai--Ma \cite{fanglaima}. We will use algebraic techniques and therefore restrict to the projective setting, assuming $\chi$ and the sought solution $\omega$ lie in the Chern class of ample line bundles $H$ and $L$. The equation then takes the general form
\begin{equation}\label{eq:eqn2}
    \sum_{i=1}^n c_i \chi^i\wedge\omega^{n-i}=C\omega^n
\end{equation}
with $c_i>0$ real constants (for us, nonnegative integers), and $C$ again a cohomological constant fully determined by the $c_i$'s and the intersection numbers $(L^i\cdot H^{n-i})$. A version of Gao Chen's result in that case has been proven by Datar--Pingali \cite{datarpingali} and Song \cite{song:nakai}, where existence is again determined by a numerical criterion of Nakai--Moishezon type. Some earlier analytic developments related to this class of equations, that we will use in the present article, are also due to Collins--Szekelyhidi \cite{colsze:jflow}.

Our purpose here is to give an alternative criterion for solvability of the inverse $\sigma_k$-equations on a complex projective manifold, in the spirit of geometric invariant theory. We will obtain a criterion of Hilbert--Mumford type, using the theory of stability of pairs developed by Paul \cite{paul:hyperdiscriminants}. Our approach has the following benefits:
\begin{enumerate}
    \item it clearly emphasises the algebraic structure underlying functionals detecting solutions to mixed Monge--Ampère equations, much as Paul's fundamental work \cite{paul:hyperdiscriminants} did for the cscK equation;
    \item any topological result that applies to stability of pairs will yield results either for solvability of such equations in families, or for topological properties of classes satisfying the aforementioned numerical criteria;
    \item we obtain a criterion close in spirit to geometric invariant theory, testing against a very large class of objects called \textit{arcs}, suggesting much flexibility in constructing and checking counterexamples;
    \item finally, our results, provided some structural properties are known to hold, are likely to apply to a wide class of equations involving higher-rank classes, as discussed in the very last section.
\end{enumerate}

To state the main result, we introduce some terminology. First, given a very ample line bundle $L$ on a compact projective manifold $X$, it is a classical fact that one can associate to it a hypersurface in some projective space, the \textit{Chow hypersurface}, defined uniquely up to scaling by the \textit{Chow form}. The restriction of the hyperplane bundle to this hypersurface is called the \textit{Chow line}. A result of Zhang \cite{zhang:height} shows that the Chow line, endowed with the Fubini--Study metric, is \textit{isometric} to a line bundle called the \textit{Deligne pairing} $\langle L,\dots,L\rangle_X$ with its Deligne metric. The general Deligne pairing construction refines the intersection pairing, yielding a canonical line bundle in any intersection class $(L_0,\dots,L_n)$, possibly over a base that is not a point. This idea is initially due to Deligne \cite{deligne:detcoh} and has been developed by Elkik both in a purely algebraic \cite{elkik:1} and metrised \cite{elkik:2} setting. Many functionals in Kähler geometry, including the ones of interest to us, may be written as Deligne pairing metrics \cite{phongrosssturm,bhj:asymptotics}, and thus generalising the result of Zhang will allow one to reduce the study of such functionals to the study of (differences of) norms of Chow-type forms.

Paul \cite{paul:hyperdiscriminants} indeed discovered, without relying on the Deligne pairing machinery (although in our case, its use will be convenient due to the general structure of our equations being nicely encoded by this formalism), that the norm of the Chow form, together with the norm of another similar vector called the \textit{hyperdiscriminant}, allows one to recover the Mabuchi functional, whose coercivity is equivalent to the existence of a cscK metric \cite{chencheng:iiiexistence}. Thus, in the \textit{finite-dimensional setting} where one only looks at Fubini--Study on a very ample power $mL$, this functional is discovered to be purely algebraic, and its coercivity needs only be tested on one-parameter subgroups of $SL(H^0(X,mL))$ acting on the Chow form and the hyperdiscriminant. This has led Paul to introduce a version of geometric invariant theory involving not just one but two different representations, called \textit{stability of pairs}.

However, a counterexample to the Hilbert--Mumford criterion in this setting has recently been discovered by Paul--Sun--Zhang \cite{paulsunzhang}: one-parameter subgroups do not suffice to check stability of pairs. In recent work with Dervan \cite{dr:3}, we showed that the Hilbert--Mumford criterion still holds if one checks more general \textit{arcs}, which are $\bbc((t))$-points (or meromorphic maps from the punctured disc into) the algebraic group. Acting on the embedding of $X$ by sections of $mL$, an arc induces a degeneration of $(X,mL)$ called a \textit{model}, which is simply the non $\bbc^*$-equivariant version of the notion of a test configuration introduced by Donaldson \cite{donaldson:scalar}. Donaldson in a series of works \cite{donaldson:bstab,donaldson:bstab2,donaldson:bstab3}, as well as Wang \cite{wang}, have suggested the potential importance of considering models in the cscK problem. We have indeed used those to make progress on the Yau--Tian--Donaldson conjecture and obtain new results about automorphism groups of K-stable varieties in \cite{dr:3}. A purpose of the present article is therefore to highlight the fact that such ideas also apply to mixed Monge--Ampère problems.

In our setting, there is also a functional $J_{\chi,\underline c}$, introduced by X. Chen \cite{xxchen:imrn} (for the J-equation) and Collins--Szekelyhidi \cite{colsze:jflow} (in general), which detects existence of solutions to \eqref{eq:eqn2}. We show that this functional, when restricted to each space $\cH_m$ of Fubini--Study metrics, is expressed as a difference of norms associated to Chow-type points $R_{\chi,\underline c}(X,m)$, Theorem \ref{thm:lognorm}. To that end, we prove the following:

\bigskip\noindent\textbf{Theorem A.} (Theorems \ref{thm:mixedisomorphism}, \ref{thm:isometry}.)
    Let $L_0,\dots,L_n$ be very ample line bundles on a complex projective manifold $X$. Let $h_i$ denote the Fubini--Study metric induced by each embedding. Let $C_{\underline L}$ be the hypersurface cut out by the resultant of the $L_i$, $R_{\underline L}(X)$, endowed with its Chow-type line bundle $\cC_{\underline L}$ and the canonical metric on it (see Section \ref{sec2} for more details). Up to a universal constant independent of $X$, there is an isometry
    $$(\langle L_0,\dots,L_n\rangle_X,\langle h_0,\dots,h_n\rangle_X)=(\cC_{\underline L},\norm).$$

For the proof of Theorem A, we follow roughly the lines of the argument of Kapadia \cite{kapadia,kapadia:thesis} and Zhang \cite{zhang:height}, who proved this result for $L_0=\dots=L_{n+1}=L$ (see a slight generalisation in \cite{qtian}). Theorem A, of independent interest, further allows us to deduce Theorem \ref{thm:lognorm}, which again in the case where all line bundles are the same has been proven in various ways in the literature \cite{zhang:height,paul:chow,paul:hyperdiscriminants,phongsturm:stability}, together with other similar results in the \textit{global} case in Arakelov geometry \cite{arakelov:i,arakelov:ii,arakelov:iii,arakelov:iv,arakelov:v,chen:moriwakiintersection}. Building on the results of \cite{dr:3} and \cite{colsze:jflow,sze:lejmi,datarpingali}, we deduce from Theorem A our main theorem:

\bigskip\noindent\textbf{Theorem B.} (Theorem \ref{thm:b}.)
    The following are equivalent:
    \begin{enumerate}
        \item there exists a solution to \eqref{eq:eqn2} for any choice of $\chi'\in[\chi]$;
        \item there exists $\varepsilon=1/k>0$ such that the pair $(R_{\chi,\underline c}(X,mL),R(X,mL))$ (appearing in Theorem \ref{thm:lognorm} is $\varepsilon$-stable in the sense of stability of pairs (\ref{thm:stabeq}) for all $m$.
    \end{enumerate}

Since (1) is known to be equivalent to Nakai--Moishezon criteria \cite{datarpingali,song:nakai}, this also gives a stability of pairs interpretation for those.

\paragraph{Organisation of the paper.} Section \ref{sect1} contains preliminaries on Deligne pairings and their metrics. In Section \ref{sec2}, we define the resultant and Chow line as discussed above, prove Theorem A. In Section \ref{sec3}, we first review inverse $\sigma_k$-equations and their associated functionals, as well as notions about stability of pairs and models from \cite{dr:3}. We then prove Theorem B, and discuss some possible generalisations of the main result.

\paragraph{Acknowledgements.} The author thanks Ruadhaí Dervan, Siarhei Finski, and Jacob Sturm for discussions and comments.

\section*{Conventions.}

We use \textit{additive} notation for tensor products of line bundles, whereby $kL-M$ denotes $L^{\otimes -1}\otimes M^{\otimes k}$. An exponent $L^k$ or $L^{(k)}$ will denote the line bundle appearing $k$ times in either an intersection number, or a Deligne pairing.

We use \textit{multiplicative} notation for metrics $h_L^{\otimes -1}\otimes h_M^{\otimes k}$. This includes Deligne pairing metrics $\langle h_0,\dots,h_n\rangle$, although we will also sometimes switch to additive notation, which will be signaled by the use of curvature forms $\langle \omega_0,\dots,\omega_n\rangle$. 


\section{Preliminaries.}\label{sect1}

Throughout, we follow the exposition of Elkik \cite{elkik:1,elkik:2}, Eriksson--Freixas \cite{eriksson:freixas}, and Yuan--Zhang \cite{yuanzhang}.

\subsection{Deligne pairings.}

The Deligne pairing machinery works in a very broad setting, but for our purposes it will be sufficient to consider $\pi:X\to S$ a holomorphic submersion between compact (connected) complex projective manifolds. Let $n=\reldim(X/S)$, $m=\dim S$ and let $L_0,\dots,L_n$ be line bundles on $X$. We describe the \textit{generators and relations} construction of Deligne pairings, following Elkik \cite[II.2.4]{elkik:1} and Zhang \cite{zhang:height}.

\begin{definition}
    The \textit{Deligne pairing}
    $$\langle L_0,\dots,L_n\rangle_{X/S}$$
    has as sections expressions of the form $\langle s_0,\dots,s_n\rangle$ where $s_i\in H^0(X,L_i)$ and $\cap_{i=0}^n (s_i)=\emptyset$, subject to relations given by the following data:
    \begin{enumerate}
        \item some $i\in[0,\dots,n]$ such that $\cap_{j\neq i}(s_j)=\sum a_k D_k$ is flat over $S$;
        \item a rational function $f$ on $X$;
        \item an identification $\langle s_0,\dots,fs_i,\dots,s_n\rangle=\sum_k a_k N_{D_k/S}(f)\langle s_0,\dots,s_n\rangle$, where $N_{D_k/S}$ denotes the norm functor.
    \end{enumerate}
    From this we can construct inductively transition functions, which define a genuine line bundle.
\end{definition}

The following isomorphism properties of Deligne pairings are then proven e.g.\ in \cite[Sections II-IV]{elkik:1}.

\begin{theorem}\label{thm:propdel}
    By the equality symbol below, we mean "there exists an isomorphism of line bundles over $S$". Furthermore, all isomorphisms commute with each other.
    \begin{enumerate}
        \item \textit{(norm in relative dimension zero)} if $n=0$,
        $$\langle L_0\rangle_{X/S}=N_{X/S}(L_0);$$
        \item \textit{(multilinearity)} for $k\in\bbr$, and given $L_0'$ another line bundle on $X$,
        $$\langle kL_0+L_0',\dots,L_n\rangle_{X/S}=k\langle L_0,\dots,L_n\rangle_{X/S}+\langle L_0',\dots,L_n\rangle_{X/S};$$
        \item \textit{(symmetry)} given a permutation $\sigma$, 
        $$\langle L_{\sigma(0)},\dots,L_{\sigma(n)}\rangle_{X/S}=\langle L_0,\dots,L_n\rangle_{X/S};$$
        \item \textit{(restriction)} given a section $s_i$ of $L_i$ defining a relative effective divisor $Y$,
        $$[s_i]:\langle L_0, L_1,\dots,L_n\rangle_{X/S}=\langle L_1|_Y,\dots,L_{i-1}|_Y,L_{i+1}|_Y,\dots,L_n|_Y\rangle_{Y/S},$$
        and
        $$\langle \cO_X, L_1,\dots,L_n\rangle_{X/S}=\cO_S.$$
    \end{enumerate}
    In particular, if $L_i=\cO_X$ for some $i$, then $\langle L_0,\dots,L_n\rangle_{X/S}=\cO_X$.
\end{theorem}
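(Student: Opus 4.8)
The plan is to deduce all four properties directly from the generators-and-relations presentation given above, following Elkik \cite{elkik:1} and proceeding by induction on the relative dimension $n$: the case $n=0$ serves as the base, the restriction isomorphism (4) is what allows one to descend in dimension, and multilinearity (2) and symmetry (3) are then obtained from this induction together with the corresponding structural properties of the relative norm functor. The coherence claim ("all isomorphisms commute") is verified along the way by tracking the explicit formulas.

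\textbf{Base case and the norm functor.} When $n=0$ the submersion $\pi$ is finite flat, any section $s_0$ of $L_0$ with $(s_0)=\emptyset$ is nowhere vanishing, and the only surviving relation is $\langle fs_0\rangle=N_{X/S}(f)\langle s_0\rangle$ for a rational function $f$; this is exactly the presentation of $N_{X/S}(L_0)$, which gives (1). From the construction of the norm functor one reads off the inputs needed below: additivity in the line-bundle argument, compatibility with base change and with restriction, multiplicativity $N_{(\sum a_kD_k)/S}(f)=\prod_k N_{D_k/S}(f)^{a_k}$, and — the one substantial point — the reciprocity law $N_{(s_1)/S}(f_0)=N_{(s_0)/S}(f_1)$ for sections $s_0,s_1$ of line bundles with disjoint supports (Weil reciprocity). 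For (4), if $s_i$ cuts out the relative effective divisor $Y$, then the transversality hypothesis $\cap_j(s_j)=\emptyset$ says precisely that the restrictions $s_j|_Y$ with $j\neq i$ have empty common zero locus, hence form admissible data on $Y/S$; sending $\langle s_0,\dots,s_n\rangle$ (with $s_i$ the distinguished section of $L_i$) to the Deligne pairing of the $s_j|_Y$, $j\neq i$, respects the relations because intersecting a flat cycle with $Y$ and forming the norm functor both commute with restriction to $Y$, so the transition functions match. The degenerate identity $\langle\cO_X,L_1,\dots,L_n\rangle_{X/S}=\cO_S$ then follows by taking $s_0$ to be the tautological section $1$ of $\cO_X$, whose divisor is empty, so the right-hand side of (4) is the Deligne pairing over the empty $S$-scheme, canonically $\cO_S$.

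\textbf{Multilinearity and symmetry.} For (2) with $k\in\Z_{>0}$: given local sections $s_0$ of $L_0$, $s_0'$ of $L_0'$, and $s_1,\dots,s_n$ with the relevant intersections empty, send $\langle s_0^{\otimes k}\otimes s_0',s_1,\dots,s_n\rangle$ to $\langle s_0,\dots,s_n\rangle^{\otimes k}\otimes\langle s_0',s_1,\dots,s_n\rangle$; a general section of $kL_0+L_0'$ differs from such a product by a rational function, and one checks that the prescription so forced is consistent with the relations using multiplicativity of the norm functor — a relation on $kL_0+L_0'$ decomposes into $k$ copies of a relation coming from $L_0$ together with one coming from $L_0'$. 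The case $k<0$ follows by transposing the offending factor to the other side of the isomorphism, $k=0$ from (4), and the general $k\in\bbr$ by $\bbr$-linear extension in accordance with the conventions. For (3) it suffices, by composing transpositions, to treat the swap of two adjacent arguments $\langle\dots,s_i,s_{i+1},\dots\rangle\mapsto\langle\dots,s_{i+1},s_i,\dots\rangle$; compatibility with relation (c) applied in slot $i$ versus slot $i+1$ is exactly the reciprocity law recorded above, and one must also check that the isomorphism assembled from a composition of transpositions does not depend on the chosen word, which again reduces to reciprocity. Finally one verifies that the multilinearity, symmetry, and restriction isomorphisms commute with one another by matching the explicit formulas in the finitely many relevant diagrams.

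\textbf{Main obstacle.} The genuinely nontrivial input is the reciprocity law for the relative norm functor, which underlies well-definedness of the symmetry isomorphism; it is proved by reduction to the case where $S$ is a point and then to Weil reciprocity on curves. Everything else is bookkeeping against the relations, with multilinearity the most laborious to set up carefully — one must argue that the product-form sections pin down the isomorphism — but conceptually routine once multiplicativity of the norm functor is in hand.
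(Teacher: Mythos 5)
Your argument is correct and is essentially the proof the paper points to: the paper gives no argument of its own beyond citing Elkik \cite{elkik:1}, Sections II--IV, and Elkik's construction is exactly the generators-and-relations presentation you work from, with Weil reciprocity for the relative norm functor as the one genuinely nontrivial input (underlying well-definedness and the symmetry isomorphism) and the remaining verifications being bookkeeping against the relations, as you say. The only point worth flagging is that ``$k\in\bbr$ by $\bbr$-linear extension'' is purely formal (a statement about $\bbr$-line bundles rather than an honest tensor power), but this matches the paper's own conventions.
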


We also have the following important properties.

\begin{theorem}[Projection formula, {\cite[Section IV]{elkik:1}}]\label{thm:proj_formula_line}
    Consider the following commutative diagram:
    \begin{center}
    \begin{tikzcd}
    Y \arrow[rd, "\pi_Y"] \arrow[d, "\nu"] &   \\
    X \arrow[r, "\pi_X"']                  & S
    \end{tikzcd}
    \end{center}
    each arrow a holomorphic submersion, and set nonnegative integers $p,q$ with $p+q+2=\reldim(Y/S)+1$, line bundles $L_0,\dots,L_p$ on $X$ (for whose pullbacks $\nu^*L_i$ we still write $L_i$), and line bundles $M_0,\dots,M_q$ on $Y$. We then have the following isomorphisms, which commute with those of the above theorem:
    \begin{enumerate}
        \item if $q=\reldim(Y/X)$, hence $p=\reldim(X/S)-1$,
        $$\langle L_0,\dots,L_{p},M_0,\dots,M_{q}\rangle_{Y/S}=\langle L_0,\dots,L_p,\langle M_0,\dots,M_q\rangle_{Y/X}\rangle_{X/S};$$
        \item if $q=\reldim(Y/X)-1$, hence $p=\reldim(X/S)$,
        $$\langle L_0,\dots,L_{p},M_0,\dots,M_{q}\rangle_{Y/S}=\left(\int_{Y/X}\prod_{i=0}^q c_1(M_i)\right)\langle L_0,\dots,L_p\rangle_{X/S};$$
        \item if $q\leq \reldim(Y/X)-2$,
        $$\langle L_0,\dots,L_{p},M_0,\dots,M_{q}\rangle_{Y/S}=\cO_S.$$
    \end{enumerate}
    Furthermore, these isomorphisms also commute with those obtained from further pullbacks, in the sense of \cite[Corollary 6.12]{eriksson:freixas}.
\end{theorem}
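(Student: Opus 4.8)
\emph{Proof proposal.} The plan is to reduce everything, via the multilinearity and restriction isomorphisms of Theorem~\ref{thm:propdel} together with the compatibility of Deligne pairings with base change, to two trivial situations: relative dimension zero, where the pairing is the norm functor (Theorem~\ref{thm:propdel}(1)), and the presence of a slot equal to $\cO$, where the pairing is $\cO_S$. The argument is an induction on $r:=\reldim(X/S)$. First I would arrange that all the $L_i$ (and, once needed, the $M_j$) are very ample: by multilinearity each is a $\Z$-linear combination of very ample bundles, and all three asserted isomorphisms are additive in each $L_i$ --- for (1) by multilinearity in the last slot of $\langle\,\cdot\,\rangle_{X/S}$, for (2) because the integer $\int_{Y/X}\prod_i c_1(M_i)$ does not involve the $L_i$, and in (3) because both sides are trivial. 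I may then choose sections $s_i$ of $L_i$ whose divisors $D_i\subset X$ are smooth and flat over $S$ (a Bertini argument, since $\pi_X$ is a submersion; flatness over the finitely many bad points of $S$ arranged by a further positivity twist, again absorbed by multilinearity).

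For case (1), set $D_0=\div(s_0)$ and $W_0=\nu^{-1}(D_0)=Y\times_X D_0$, so $W_0\to D_0$ is a submersion of relative dimension $\reldim(Y/X)$ and $D_0\to S$ one of relative dimension $r-1$. The restriction isomorphism over $Y/S$ gives $\langle L_0,\dots,L_p,M_0,\dots,M_q\rangle_{Y/S}\cong\langle L_1|_{W_0},\dots,L_p|_{W_0},M_0|_{W_0},\dots,M_q|_{W_0}\rangle_{W_0/S}$, and over $X/S$ it gives $\langle L_0,\dots,L_p,\langle M_0,\dots,M_q\rangle_{Y/X}\rangle_{X/S}\cong\langle L_1|_{D_0},\dots,L_p|_{D_0},\langle M_0,\dots,M_q\rangle_{Y/X}|_{D_0}\rangle_{D_0/S}$. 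Base change identifies $\langle M_0,\dots,M_q\rangle_{Y/X}|_{D_0}$ with $\langle M_0|_{W_0},\dots,M_q|_{W_0}\rangle_{W_0/D_0}$, and since $\reldim(D_0/S)=r-1$ the inductive hypothesis applied to $W_0\to D_0\to S$ turns the second right-hand side into the first. The base case $r=0$ is Theorem~\ref{thm:propdel}(1) itself: with no $L$'s, $\langle M_0,\dots,M_q\rangle_{Y/S}=N_{X/S}\!\big(\langle M_0,\dots,M_q\rangle_{Y/X}\big)$.

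For cases (2) and (3) I would peel off $L_0,\dots,L_{r-1}$ the same way, reducing to $Z:=D_0\cap\dots\cap D_{r-1}$ finite flat over $S$ and $W:=\nu^{-1}(Z)$; the integer $\int_{Y/X}\prod_i c_1(M_i)$ is unchanged along the way by flatness. One is left with $r=0$, so $X\to S$ is finite flat and all $L_i$ are pulled back from $X$. There one restricts the very ample $M_j$ as well, reaching relative dimension zero over $S$; transitivity of the norm functor, $N_{\,\bullet/S}=N_{X/S}\circ N_{\,\bullet/X}$, together with the fact that the norm along a degree-$N$ finite flat map raises a pulled-back bundle to its $N$-th power, gives case (2), with the factor $N=\int_{Y/X}\prod_i c_1(M_i)$ appearing as the fibre degree (using Theorem~\ref{thm:propdel}(1) once more). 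In case (3) there remains at least one surplus pulled-back slot; running the same reduction while invoking case (2) with one surplus $L$-bundle in the $L$-slot produces a power with vanishing exponent --- the relevant fibre integral vanishes for dimension reasons precisely because a surplus slot is a pullback --- so the pairing is $\cO_S$.

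The numerics of (2)--(3) are easy once the relative-dimension-zero case and base change are in hand; the genuinely delicate point, which I expect to be the main obstacle, is \emph{canonicity and gluing}. One must verify that the isomorphisms produced above do not depend on the chosen sections (two choices differ by a rational function, and the defining relation of the pairing propagates the identification through the induction), that the successive divisors can be kept smooth and flat over $S$ at every stage without disturbing additivity, and that the fibrewise identifications glue to genuine isomorphisms of line bundles over $S$ commuting with those of Theorem~\ref{thm:propdel} and with further pullbacks --- the last being the content of \cite[Corollary~6.12]{eriksson:freixas}, obtained by a diagram chase.
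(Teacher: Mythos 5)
This theorem is quoted in the paper as a known result with a pointer to \cite[Section IV]{elkik:1} (and \cite{eriksson:freixas} for the compatibility statements); the paper contains no proof of it, so there is nothing internal to compare your argument against. As a reconstruction of the standard argument, your d\'evissage --- reduce to very ample bundles by multilinearity, peel off slots by restricting to divisors of sections flat over the base, and terminate with the norm functor in relative dimension zero --- is the right skeleton and is essentially how the result is established in the literature. Two points deserve flagging. First, your base case for (1) is not literally Theorem~\ref{thm:propdel}(1): what you need when $\reldim(X/S)=0$ is the identity $\langle M_0,\dots,M_q\rangle_{Y/S}=N_{X/S}\bigl(\langle M_0,\dots,M_q\rangle_{Y/X}\bigr)$, i.e.\ compatibility of the pairing with finite pushforward at the bottom of the tower; this is itself an instance of the projection formula and has to be verified directly on the generators $\langle s_0,\dots,s_q\rangle$ using transitivity of norms, $N_{D/S}=N_{X/S}\circ N_{D/X}$ --- it does not follow from Theorem~\ref{thm:propdel}(1) alone, which only identifies the outer pairing $\langle\,\cdot\,\rangle_{X/S}$ with $N_{X/S}$. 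Second, you correctly identify that the real content is canonicity: independence of the constructed isomorphism from the chosen sections and from the very-ample decompositions, plus the commutation with the isomorphisms of Theorem~\ref{thm:propdel} and with base change. These are not afterthoughts to be ``glued by a diagram chase'' but are where the work in \cite{elkik:1} actually lies (the relations in the generators-and-relations presentation must be shown to propagate through each restriction step); as written your proposal defers them entirely. With those caveats, the numerics in (2) and (3) --- the fibre degree $\int_{Y/X}\prod_i c_1(M_i)$ emerging from the degree of a finite flat map, and triviality in (3) because a surplus pulled-back slot becomes trivial over a base of relative dimension zero --- are handled correctly.
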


\subsection{Deligne metrics.}

Assume now the $L_0,\dots,L_n$ to be (say) $\pi$-very ample, and let $h_0,\dots,h_n$ be relatively Kähler (or semipositive) metrics on them, with associated $\omega_i\in c_1(L_i)$. Elkik \cite{elkik:2} shows that one may associate to them a semipositive metric $\langle h_0,\dots,h_n\rangle_{X/S}$ on the Deligne pairing $\langle L_0,\dots,L_n\rangle_{X/S}$. Working fibrewise, so that we may assume $S$ is a point, then given $s_i\in H^0(X,L_i)$ such that $\langle s_0,\dots,s_n\rangle$ is a Deligne section, and setting, for each $i$, $Z_i=\{s_i=0\}$, we have
\begin{equation}\label{eq_delignemetric}
    -\log|\langle s_0,\dots,s_n\rangle|_{\langle h_0,\dots,h_n\rangle_{X}}=-\sum_{i=0}^n\int_{Z_{i-1}}\log|s_i|_{h_i}\omega_{i+1}\wedge\dots\wedge\omega_n,
\end{equation}
see the exposition of \cite[Section 4.2.2]{yuanzhang}. Varying the point in the base, it turns out that the metric is continuous \cite[Theorem 4.2.3]{yuanzhang} (see also \cite{moriwaki:deligne}). The pairing of metrics is further uniquely determined by the fact that the following holds:
\begin{theorem}\label{thm:iso}
    The following isomorphisms extend to isometries (where we do not specify the Deligne metrics when they are considered to be obvious):
    \begin{enumerate}
        \item multilinearity and symmetry in Theorem \ref{thm:propdel}(2,3) (\cite[Théorème I.1.1(c)]{elkik:2});
        \item restriction in Theorem \ref{thm:propdel}(4), where the norm of the isomorphism $[s_i]$ is
        $$\log\|[s_i]\|=-\int_{X/S}\log\|s\|_{h_i}\omega_{0}\wedge\dots\wedge\omega_{i-1}\wedge\omega_{i+1}\wedge\dots\wedge\omega_{n}$$
        (\cite[Théorème I.1.1(d)]{elkik:2});
        \item the projection formul\ae\, in Theorem \ref{thm:proj_formula_line} (\cite[Théorème I.2.2]{elkik:2}).
    \end{enumerate}
\end{theorem}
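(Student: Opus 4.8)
The plan is to reduce every assertion to the explicit fibrewise formula \eqref{eq_delignemetric}. Working fibrewise as in the excerpt, I may assume $S$ is a point, where each Deligne metric is given by \eqref{eq_delignemetric}; the statements over a general base then follow by base change, the metric being continuous in $S$ \cite[Theorem 4.2.3]{yuanzhang}. For a fixed Deligne section I would first choose sections $s_i\in H^0(X,L_i)$ in sufficiently general position---possible since the $L_i$ are very ample---so that each subvariety $Z_0\cap\dots\cap Z_{i-1}$ entering \eqref{eq_delignemetric} is reduced of the expected dimension; the integrals are then finite and the Poincar\'e--Lelong identity $\ddc\log|s_i|^2_{h_i}=[Z_i]-\omega_i$, together with Stokes' theorem for the relevant currents, is available.

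The crux is the symmetry isometry, from which multilinearity and restriction follow with little extra work. I would show by induction on $n$ that the right-hand side of \eqref{eq_delignemetric} is symmetric in the data $(L_i,h_i,s_i)$ and independent of the $s_i$ modulo the Deligne relations: comparing the formula for two orderings differing by the transposition $0\leftrightarrow 1$, one integrates by parts and is left with precisely the Poincar\'e--Lelong identity for $s_0$ and $s_1$ on the top-degree terms, plus the inductive hypothesis applied on $Z_0$ and on $Z_1$ for the lower-order terms. Multilinearity is then visible from the formula: a section of $kL_0+L_0'$ with divisor $kZ_0+Z_0'$, together with $\omega_{kL_0+L_0'}=k\omega_0+\omega_0'$, makes the right-hand side additive in the first slot. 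For the restriction isometry, take $i=n$ by symmetry and a section $s_n$ cutting out $Y$; subtracting \eqref{eq_delignemetric} for $\langle s_0|_Y,\dots,s_{n-1}|_Y\rangle$ on $Y$ from \eqref{eq_delignemetric} for $\langle s_0,\dots,s_n\rangle$ on $X$ and telescoping the differences of integrals against $[Z_n]=\omega_n+\ddc\log|s_n|^2_{h_n}$ leaves exactly $-\int_X\log|s_n|_{h_n}\,\omega_0\wedge\dots\wedge\omega_{n-1}$, which is the claimed $\log\|[s_n]\|$; the identity $\langle\cO_X,L_1,\dots,L_n\rangle_{X/S}=\cO_S$ is the case $s_n\equiv 1$.

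For the projection formul\ae{} I would argue by Fubini, writing $\int_{Y/S}=\int_{X/S}\int_{Y/X}$. In case (1), $q=\reldim(Y/X)$, performing the inner integration in \eqref{eq_delignemetric} and using that the pulled-back $L_i$ have $|s_i|$ constant along $\nu$-fibres produces precisely the Deligne metric $\langle h_{M_0},\dots,h_{M_q}\rangle_{Y/X}$ on $\langle M_0,\dots,M_q\rangle_{Y/X}$ in the remaining slot, giving the isometry. Case (2), $q=\reldim(Y/X)-1$, is the same computation with the inner integral now a constant, $d=\int_{Y/X}\prod c_1(M_i)$, which multiplies $-\log$ of the remaining Deligne metric and matches the metric $\langle h\rangle^{\otimes d}$ on $d\langle L_0,\dots,L_p\rangle_{X/S}$; case (3) is trivial. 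Commutativity of all these isometries with each other and with further pullbacks is then automatic: the underlying isomorphisms commute by \cite[Section IV]{elkik:1} and \cite[Corollary 6.12]{eriksson:freixas}, and the only additional data, the scalars $\|[s_i]\|$ attached to the restriction isomorphisms, are the explicit integrals just computed. The step I expect to be the main obstacle is the symmetry argument: one must make the iterated Poincar\'e--Lelong computation rigorous even though the loci $Z_0\cap\dots\cap Z_{i-1}$ are only generically well-behaved and $\log|s_i|$ is singular along them, which is handled either by a general Stokes theorem for such currents, or by a perturbation argument---again using very ampleness---that reduces to the transverse case where the manipulations are classical.
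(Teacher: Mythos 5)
The paper does not actually prove this theorem: it is recorded as a known result, with each item attributed to Elkik (Th\'eor\`eme I.1.1(c), I.1.1(d), I.2.2 of the cited work), so there is no in-paper argument to compare against. Your sketch is nonetheless a correct outline of the standard proof of these facts, essentially the route taken by Elkik, Zhang, Moriwaki and Yuan--Zhang: reduce everything to the fibrewise formula \eqref{eq_delignemetric}, obtain symmetry as the symmetry of the star product of Green currents via Poincar\'e--Lelong and Stokes, read off multilinearity and restriction, and get the projection formul\ae{} by fibre integration (in case (2) the terms involving the $\log|s_{M_j}|$ even vanish outright, since $p+1=\reldim(X/S)+1$ generic divisors pulled back from $X$ already have empty common intersection). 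Two remarks. First, under the transposition $0\leftrightarrow 1$ only the first two terms of \eqref{eq_delignemetric} change --- the loci $Z_0\cap\dots\cap Z_{i-1}$ for $i\geq 2$ and their integrands are already symmetric in the first two indices --- so no inductive hypothesis is needed there; induction enters only to handle transpositions deeper in the list, by applying the same two-slot identity on the subvariety $Z_0\cap\dots\cap Z_{j-1}$. Relatedly, the restriction isometry is cleanest if you first use symmetry to move $s_i$ to the front, after which the leading term of \eqref{eq_delignemetric} is literally $-\log\|[s_i]\|$ and the remaining sum is the Deligne metric on $Y$; no telescoping is then required. Second, the step you flag --- justifying Stokes and Poincar\'e--Lelong against the singular functions $\log|s_i|$ on the only generically transverse loci --- is genuinely where all the analytic work lies, and a self-contained write-up would have to carry out the regularisation or perturbation argument rather than gesture at it; as the paper's intent is simply to quote Elkik, this is the one place where your proposal, while correct in strategy, stops short of a complete proof.
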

We also have the \textit{change of metric formula}: given a smooth $\phi$, we have
\begin{equation}\label{eq:changemetric2}
    \langle h_0e^{-\phi},h_1,\dots,h_n\rangle_{X/S}=e^{-\int_{X/S} \phi \omega_1\wedge\dots\wedge\omega_n}\langle h_0,\dots,h_n\rangle_{X/S},
\end{equation}
or additively
\begin{equation}\label{eq:changemetric}
    \langle \omega_0+dd^c\phi,\omega_1,\dots,\omega_n\rangle_{X/S}-\langle \omega_0,\dots,\omega_n\rangle_{X/S}=\int_{X/S} \phi \omega_1\wedge\dots\wedge\omega_n
\end{equation}
as a metric on $\cO_S$. Using the multilinearity and symmetry isometries below, one can then inductively deduce a formula for general pairings
$$\langle \omega_0+dd^c{\phi_0},\dots,\omega_n+dd^c{\phi_n}\rangle_{X/S}-\langle \omega_0,\dots,\omega_n\rangle_{X/S}.$$
Corollarily, we obtain the following $C^0$ estimate:
\begin{lemma}
    There exists a uniform constant $C=C(n,\underline L)$ such that, for any $\omega_i,\omega_i+dd^c\phi_i\in c_1(L_i)$,
    $$|\langle \omega_0+dd^c\phi_0,\dots,\omega_n+dd^c\phi_n\rangle_X-\langle \omega_0,\dots,\omega_n\rangle_X|\leq C\sum_i \sup_X |\phi_i|.$$
\end{lemma}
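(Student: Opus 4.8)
The plan is to reduce the general bound to iterated applications of the change-of-metric formula~\eqref{eq:changemetric}. First I would write the difference telescopically: replacing $\omega_i$ by $\omega_i + dd^c\phi_i$ one factor at a time, one obtains
\[
\langle \omega_0+dd^c\phi_0,\dots,\omega_n+dd^c\phi_n\rangle_X-\langle \omega_0,\dots,\omega_n\rangle_X
=\sum_{i=0}^n \Bigl(\langle \dots,\omega_i+dd^c\phi_i,\dots\rangle_X-\langle \dots,\omega_i,\dots\rangle_X\Bigr),
\]
where in the $i$-th term the first $i$ slots already carry the perturbed forms $\omega_j+dd^c\phi_j$ ($j<i$) and the last $n-i$ slots carry the unperturbed $\omega_j$ ($j>i$). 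By symmetry (Theorem~\ref{thm:iso}(1)) each term is of the shape $\langle \eta_0+dd^c\phi_i,\eta_1,\dots,\eta_n\rangle_X-\langle \eta_0,\dots,\eta_n\rangle_X$ with $\eta_0\in c_1(L_i)$ and $\eta_1,\dots,\eta_n$ a fixed collection of closed positive $(1,1)$-forms representing the remaining classes, so~\eqref{eq:changemetric} identifies it with $\int_X \phi_i\,\eta_1\wedge\dots\wedge\eta_n$, a number (the base $S$ being a point here).

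Next I would bound each such integral. Since the $\eta_j$ are semipositive representatives of $c_1(L_j)$, we have $0\le \int_X \eta_1\wedge\dots\wedge\eta_n = (L_{j_1}\cdots L_{j_n})$, a topological intersection number depending only on $n$ and $\underline L$; hence
\[
\Bigl|\int_X \phi_i\,\eta_1\wedge\dots\wedge\eta_n\Bigr|\le \bigl(\textstyle\prod_{j\ne i} L_j\bigr)\sup_X|\phi_i|.
\]
Summing over $i$ gives the claim with $C=C(n,\underline L)=\max_i \prod_{j\ne i}(L_j\cdot{}\!\text{-fold intersection})$, or simply the sum of all these intersection numbers. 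One subtlety to record: in the $i$-th telescoping term the forms $\eta_j$ for $j<i$ are the \emph{perturbed} $\omega_j+dd^c\phi_j$, which are a priori only closed, not semipositive; but the change-of-metric formula~\eqref{eq:changemetric} is an exact cohomological identity valid for any closed representatives, and the resulting integral $\int_X\phi_i\prod_{j\ne i}\eta_j$ only depends on the cohomology classes of the $\eta_j$ once $\phi_i$ is fixed — so we may freely replace each $\eta_j$ by a fixed semipositive representative of $c_1(L_j)$ before estimating, and the intersection-number bound goes through unchanged.

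The main (and only mild) obstacle is bookkeeping: making sure that after peeling off one slot via~\eqref{eq:changemetric} the remaining pairing is genuinely of the form covered by that formula, i.e.\ that one is allowed to move the perturbed slot into position $0$ using the symmetry \emph{isometry} rather than merely an isomorphism, and that the inductive step in ``one can then inductively deduce a formula for general pairings'' (as asserted just before the lemma) has been carried out. Since Theorem~\ref{thm:iso} upgrades symmetry and multilinearity to isometries, this is immediate, and no analytic input beyond~\eqref{eq:changemetric} and positivity of intersection numbers is needed. I would therefore present the proof as: (i) telescope; (ii) apply~\eqref{eq:changemetric} termwise after a symmetry swap; (iii) estimate each term by $\sup|\phi_i|$ times a fixed intersection number; (iv) sum.
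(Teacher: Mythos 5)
Your overall strategy --- telescope slot by slot, apply the change-of-metric formula \eqref{eq:changemetric} to each term after a symmetry swap, and bound the resulting integral by $\sup_X|\phi_i|$ times the intersection number $\delta_i=(L_0\cdots\widehat{L_i}\cdots L_n)$ --- is exactly the argument the paper intends (the lemma is stated as an immediate corollary of \eqref{eq:changemetric} with no further proof), and the skeleton (i)--(iv) you give is correct.

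However, the justification in your ``subtlety'' paragraph is wrong, and it matters. You claim that $\int_X\phi_i\,\eta_1\wedge\dots\wedge\eta_n$ depends only on the cohomology classes of the $\eta_j$ once $\phi_i$ is fixed. It does not: replacing $\eta_1$ by $\eta_1+dd^c\psi$ changes the integral by $\int_X\phi_i\,dd^c\psi\wedge\eta_2\wedge\dots\wedge\eta_n=\int_X\psi\,dd^c\phi_i\wedge\eta_2\wedge\dots\wedge\eta_n$, which is a Dirichlet-type term that is not controlled by $\sup|\phi_i|$ (for $\phi_0=\phi_1=\psi$ on a curve it equals $-\int_X d\psi\wedge d^c\psi$, which can be made arbitrarily large with $\sup|\psi|=1$). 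So you may \emph{not} ``freely replace each $\eta_j$ by a fixed semipositive representative before estimating''; indeed, the lemma itself would be false if the $\phi_j$ were arbitrary smooth functions. The correct resolution of the point you raise is simpler: in the paper's setting the Deligne metric is only defined for semipositive (relatively K\"ahler) metrics, so the hypothesis $\omega_j+dd^c\phi_j\in c_1(L_j)$ is to be read as saying these perturbed forms are themselves semipositive. Then in the $i$-th telescoping term the measure $\prod_{j<i}(\omega_j+dd^c\phi_j)\wedge\prod_{j>i}\omega_j$ is already a positive measure of total mass $\delta_i$, and the estimate $\bigl|\int_X\phi_i\,(\cdots)\bigr|\le\delta_i\sup_X|\phi_i|$ goes through with no replacement at all. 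With that correction (delete the cohomological-invariance claim, invoke semipositivity of the perturbed representatives instead), your proof is complete and agrees with the paper's.
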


We conclude with a remark that will be essential in our proof later on, which appears in \cite[Section 6]{kapadia}.

\begin{remark}[Polarisation of a hypersurface via Deligne pairings]\label{rem:hypersurface}
    Let $X$ be a projective variety of dimension $n$, and let $L$ be a very ample line bundle on $X$. Pick a section $s_0\in H^0(X,L)$, and consider the hypersurface $Z$ defined by the vanishing of $s_0$. Let $\cC$ be the restriction of $\cO_{\bbp H^0(X,L)}(1)$ to the line $[s_0]\in \bbp H^0(X,L)$. Then, there is an $Aut(X,L)$-equivariant isomorphism
    \begin{equation}\label{eq:hypersurface}
        (L^n)\cC=\langle L^{(n-1)}\rangle_Z - \langle L^{(n)}\rangle_X.
    \end{equation}
    Furthermore, if $h$ is a Hermitian metric on $L$, we define a norm on $H^0(X,L)$ by
    \begin{equation}\label{eq:norm2}
        (L^n)\log\|s\|^2_h=\int_X \log|s|^2_h \omega^n,
    \end{equation}
    which yields a Hermitian metric on $\cC$. With this metric, and the Deligne pairing metric on the right-hand side, \eqref{eq:hypersurface} is then an isometry.
\end{remark}

\section{A general Deligne pairing isometry.}\label{sec2}

For convenience, we prove the main results of this section and the next in the case where the base $S=\Spec \bbc=\{x\}$ is a point, which is all we will need in the present paper; but the proofs naturally extend to (say) flat proper morphisms $\pi:X\to S$ with $S$ a more general base.

\subsection{The mixed Chow line.}

Let $X$ be a compact (connected) complex projective manifold of dimension $n$, and let $\underline L=(L_0,\dots,L_n)$ be a $(n+1)$-uple of very ample line bundles on $X$. We first consider the following objects:
\begin{enumerate}
    \item $V_i:=H^0(X,L_i)$, with dimension $\dim(V_i)=:r_i+1$;
    \item for each $i$, the \textit{incidence variety} $I_i$ defined by the vanishing of the canonical section of $\cO_{\bbp V_i}(1)+\cO_{\bbp V_i^\vee}(1)$ on $\bbp V_i\times \bbp V_i^\vee$ (throughout, a fibre product without specified base is simply taken over the base point);
    \item the fibre product $\underline I:=\times_{i=0}^{n}I_i$ inside $\bigbbp\times\bigbbp^\vee:=\left(\times_{i=0}^{n} \bbp V_i\right)\times\left(\times_{i=0}^{n}\bbp V_i^\vee\right)$;
    \item for each $i$, the embedding $\iota_i:X\hookrightarrow \bbp V_i$ yielding an embedding $\underline \iota:X\hookrightarrow \bigbbp$;
    \item the fibre product $\Gamma_{\underline L}:=X\times_\bigbbp \underline I$, which is a closed subvariety of $X\times\bigbbp^\vee$ (hence of $\bigbbp\times \bigbbp^\vee$); 
    \item finally, $C_{\underline L}:=q_*\Gamma_{\underline L}$, where $q$ is the projection $X\times\bigbbp^\vee\to\bigbbp^\vee$.
\end{enumerate}

We call $C_{\underline L}$ the \textit{mixed Chow hypersurface}, which has dimension $(\sum_i r_i)-1$. By \cite[Proposition 1.6.2, Definition 1.6.3]{chen:moriwakiintersection}, this is a hypersurface of multi-degree $(\delta_0,\dots,\delta_n)$, where
$$\delta_i:=(c_1(L_0)\cdot \ldots\cdot c_1(L_{i-1})\cdot c_1(L_{i+1})\cdot\ldots\cdot c_1(L_n));$$
it further is defined uniquely up to multiplication by $\lambda\in\bbc^*$ by an element
$$R_{\underline L}(X)\in \otimes_{i=0}^n S^{\delta_i}(V_i^\vee),$$
hence by a genuine unique element $[R_{\underline L}(X)]$ in the projectivisation of the above vector space. We call any such $R_{\underline L}(X)$ a \textit{mixed Chow form} for $X$ with respect to $\underline L$, and $[R_{\underline L}(X)]$ the \textit{mixed Chow point}.

\begin{remark}[Action of the $SL(V_i)$]\label{rem:action}
    There is an action of $SL(V_i)$ on $V_i$, hence on the embedding of $X$ inside $\bbp V_i$, and on the above vector space. Given $\sigma_i\in SL(V_i)$, it is easy to see that $R_{\underline L}((\sigma_0,\dots,\sigma_n)\cdot X)=(\sigma_0,\dots,\sigma_n)R_{\underline L}(X)$. In our applications to the inverse $\sigma_k$-equations, we will consider pairings of the form $\langle L^{k+1},H^{n-k}\rangle_X$ where we will not act on the $H$-embeddings, while we will consider the diagonal action of $SL(H^0(X,L))^{k+1}$ on $X$ embedded by sections by $L$. We will simply denote this action by $\sigma\cdot X$, in which case we will also have $R_{\underline L}(\sigma\cdot X)=\sigma\cdot R_{\underline L}(X)$.
\end{remark}

\begin{remark}[Interpretation of the incidence variety]\label{rem:incidence}
    Consider the following diagram, in order to denote the arrows:
    \begin{center}
    \begin{tikzcd}
\Gamma_{\underline L}=X\times_{\bigbbp} \underline I \arrow[r, "\iota", hook] & X\times \bigbbp^\vee \arrow[d, "p", two heads] \arrow[r, "q", two heads] & \bigbbp^\vee \arrow[r, "q_i", two heads] & \bbp V_i^\vee \\
                                                                              & X                                                  &                               &              
    \end{tikzcd}
    \end{center}
    Let $s_i$ be the canonical section of $p^*L_i + q^*q_i^*\cO_{\bbp V_i^\vee}(1)$. Defining inductively $\Gamma_0=X\times \bigbbp^\vee$, $\Gamma_i=\Gamma_{i-1}\cap \{s_i=0\}$, we find that $\Gamma_n=\Gamma_{\underline L}$.
\end{remark}

\begin{definition}\label{def:mixedchow}
    We denote by $\cO(1)_i^\vee:=q^*q_i^*\cO_{\bbp V_i^\vee}(1)$ for simplicity of notation. We also set
    $$\underline \cL:=\left(\sum_i \delta_i\cO(1)_i^\vee\right).$$
    This is an ample line bundle on $\bigbbp^\vee$, and thus defines an embedding of $\bigbbp^\vee$ (hence of $C_{\underline L}$) into $\bbp H^0(\bigbbp^\vee,\underline \cL)$. The \textit{mixed Chow line} $\cC_{\underline L}$ is the restriction of $\cO_{\bbp H^0(\bigbbp^\vee,\underline \cL)}(1)$ to the mixed Chow point $[R_{\underline L}(X)]\in \bbp H^0(\bigbbp^\vee,\underline L)$.
\end{definition}

\subsection{Isomorphism.}
\begin{theorem}\label{thm:mixedisomorphism}
    There exists an isomorphism
    $$\langle L_0,\dots,L_n\rangle_{X}=\cC_{\underline L}$$
    which is equivariant under the action in Remark \ref{rem:action}.
\end{theorem}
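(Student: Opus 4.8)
The plan is to build the isomorphism $\langle L_0,\dots,L_n\rangle_X = \cC_{\underline L}$ by running the inductive incidence-variety construction of Remark \ref{rem:incidence} through the restriction and projection-formula isomorphisms of Theorems \ref{thm:propdel} and \ref{thm:proj_formula_line}, and identifying the result with the Deligne-pairing description of the Chow line given in Remark \ref{rem:hypersurface}. Concretely, I would first work over the total space $\bigbbp^\vee$ (so the ``base'' $S$ is $\bigbbp^\vee$, not a point), and consider on $\Gamma_0 = X\times\bigbbp^\vee$ the line bundles $p^*L_i + \cO(1)_i^\vee$ carrying their canonical sections $s_i$ from Remark \ref{rem:incidence}. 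The key computation is to evaluate the Deligne pairing
$$\bigl\langle p^*L_0+\cO(1)_0^\vee,\ \dots,\ p^*L_n+\cO(1)_n^\vee\bigr\rangle_{(X\times\bigbbp^\vee)/\bigbbp^\vee}$$
in two different ways. On one hand, by multilinearity (Theorem \ref{thm:propdel}(2)) we expand it into a sum of mixed pairings; since the $\cO(1)_i^\vee$ are pulled back from $\bigbbp^\vee$, the projection formula (Theorem \ref{thm:proj_formula_line}) collapses each term: the term with all $p^*L_i$ gives $\langle L_0,\dots,L_n\rangle_X$ (a trivial bundle on $\bigbbp^\vee$, via Theorem \ref{thm:proj_formula_line}(2) integrating the top $L$-classes), the term with exactly one $\cO(1)_i^\vee$ and the rest $p^*L_j$ contributes $\delta_i\,\cO_{\bbp V_i^\vee}(1)$ pulled back (again Theorem \ref{thm:proj_formula_line}(2), the coefficient being precisely the intersection number $\delta_i$), and all terms with two or more $\cO(1)^\vee$-factors vanish by Theorem \ref{thm:proj_formula_line}(3) (fibre dimension reasons). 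Thus this pairing equals $\langle L_0,\dots,L_n\rangle_X + \underline\cL$ as a bundle over $\bigbbp^\vee$, where $\underline\cL = \sum_i \delta_i \cO(1)_i^\vee$ as in Definition \ref{def:mixedchow}.

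On the other hand, I would use the sections $s_0,\dots,s_n$ to cut down inductively: restricting along $s_0$ (Theorem \ref{thm:propdel}(4)) identifies the pairing with $\langle \cdots \rangle_{\Gamma_1/\bigbbp^\vee}$, then restricting along $s_1|_{\Gamma_1}$, and so on, so that after $n+1$ steps the pairing becomes $\langle\,\rangle_{\Gamma_{n+1}/\bigbbp^\vee}$ over the empty relative dimension — i.e.\ a norm in relative dimension zero (Theorem \ref{thm:propdel}(1)) along $\Gamma_{\underline L}\to\bigbbp^\vee$. Here one must be slightly careful: the construction of Remark \ref{rem:incidence} produces $\Gamma_n=\Gamma_{\underline L}$ and pushing forward by $q$ gives the hypersurface $C_{\underline L}\subset\bigbbp^\vee$, which is cut out by $R_{\underline L}(X)$, a section of $\underline\cL$. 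Running the restriction isomorphisms and the relative-dimension-zero norm identifies the norm functor $N_{C_{\underline L}/\bigbbp^\vee}$ applied to the relevant bundle with $\cO_{\bbp H^0(\bigbbp^\vee,\underline\cL)}(1)$ restricted to $[R_{\underline L}(X)]$ — this is exactly the mechanism of Remark \ref{rem:hypersurface}, now in the mixed/multigraded setting and over the base $\bigbbp^\vee$ rather than a point. Combining the two evaluations and cancelling the common $\underline\cL$ yields $\langle L_0,\dots,L_n\rangle_X = \cC_{\underline L}$.

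Equivariance under the $SL(V_i)$-action of Remark \ref{rem:action} should then be essentially automatic: every isomorphism used — multilinearity, symmetry, restriction along the canonical sections $s_i$, the projection formulae, and the relative-dimension-zero norm — is natural/functorial, and the $SL(V_i)$ act compatibly on $X\hookrightarrow\bbp V_i$, on $\bbp V_i^\vee$, on the incidence varieties $I_i$, on $\underline\cL$, and hence on $R_{\underline L}(X)$ by $R_{\underline L}(\sigma\cdot X)=\sigma\cdot R_{\underline L}(X)$; one checks the construction of $\Gamma_{\underline L}$ and $C_{\underline L}$ is $SL$-equivariant and then invokes functoriality of all the Deligne isomorphisms (cf.\ \cite{eriksson:freixas}).

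The main obstacle I expect is bookkeeping in the multigraded/mixed setting: ensuring that the inductive restriction along $s_0,\dots,s_n$ really does terminate in the correct relative-dimension-zero pairing over $\Gamma_{\underline L}$ (in particular that at each stage $\Gamma_i\to\bigbbp^\vee$ is flat of the expected relative dimension and the $s_i$ define relative effective divisors), and matching the coefficients $\delta_i$ produced by the projection formula with the multidegree of $C_{\underline L}$ as quoted from \cite{chen:moriwakiintersection}. The analogous single-bundle case is handled in \cite{kapadia,zhang:height}; the work here is to carry the same argument through with distinct $L_i$ and distinct projective factors $\bbp V_i^\vee$, keeping track of which pullback lives where so that the projection-formula and restriction isomorphisms can be legitimately applied and are seen to commute (Theorem \ref{thm:proj_formula_line}, last sentence, and \cite[Corollary 6.12]{eriksson:freixas}).
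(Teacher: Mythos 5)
Your proposal is correct in its essential content and rests on the same core mechanism as the paper: evaluate a Deligne pairing built from the twisted bundles $p^*L_i+\cO(1)_i^\vee$ in two ways, once by restricting along the incidence sections $s_i$ to reach $\Gamma_{\underline L}$ (hence $C_{\underline L}$ and the Chow form), and once by multilinear expansion plus the projection formula to isolate $\langle L_0,\dots,L_n\rangle_X$ and the $\delta_i\,\cO(1)_i^\vee$ terms. The packaging differs: you work \emph{relatively} over $\bigbbp^\vee$ with the $(n+1)$-fold pairing $\langle p^*L_0+\cO(1)_0^\vee,\dots,p^*L_n+\cO(1)_n^\vee\rangle_{(X\times\bigbbp^\vee)/\bigbbp^\vee}$ and identify it with $\cO_{\bigbbp^\vee}(C_{\underline L})$ via its canonical section, whereas the paper works \emph{absolutely} (base a point), inserting $\sum_i r_i$ extra copies of $\underline\cL$ into the pairing $\cD$ so that everything reduces to intersection numbers against $\underline\cL$, and then invokes Remark \ref{rem:hypersurface} with $X=\bigbbp^\vee$, $Z=C_{\underline L}$ to convert $\langle\underline\cL^{(\sum r_i)}\rangle_{C_{\underline L}}-\langle\underline\cL^{(\sum r_i+1)}\rangle_{\bigbbp^\vee}$ into $D\,\cC_{\underline L}$. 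Your version is arguably leaner (no constant $D$, no auxiliary remark), and is closer to the resultant-as-section-of-a-Deligne-pairing viewpoint of Elkik and of \cite{chen:moriwakiintersection}; the paper's version has the advantage that it recycles Remark \ref{rem:hypersurface} verbatim, which is also exactly what is needed later for the metrised statement (Theorem \ref{thm:isometry}).

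Two points in your route need more care than you give them. First, the final cut is not an instance of Theorem \ref{thm:propdel}(4): on $\Gamma_{n-1}$, which is generically finite over $\bigbbp^\vee$, the divisor of the last section $s_n$ is supported over the hypersurface $C_{\underline L}$ and is therefore \emph{not} flat over the base, so you cannot ``restrict to $\Gamma_{n+1}$''. What you must do instead is stop at relative dimension zero, take the norm $N_{\Gamma_{n-1}/\bigbbp^\vee}(s_n)$ as a section of the norm bundle, and identify its divisor with $q_*\Gamma_{\underline L}=C_{\underline L}$ (with multiplicity one); only then does dividing by a defining equation $R_{\underline L}(X)\in H^0(\bigbbp^\vee,\underline\cL)$ produce the trivialisation whose residual scaling ambiguity is recorded by $\cC_{\underline L}$. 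This is precisely the step the paper's absolute setup avoids, since over a point every restriction is legitimate. Second, the all-$p^*L_i$ term in your expansion is computed by compatibility of Deligne pairings with base change along $\bigbbp^\vee\to\mathrm{pt}$ (giving the constant line $\langle L_0,\dots,L_n\rangle_X$), not by Theorem \ref{thm:proj_formula_line}(2), whose numerology does not fit that term; the fact is standard (see \cite{eriksson:freixas}) but should be cited as such. Neither issue is fatal, and your equivariance argument matches the paper's (naturality of all isomorphisms plus $R_{\underline L}(\sigma\cdot X)=\sigma\cdot R_{\underline L}(X)$).
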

\begin{proof}
    Throughout we keep the notation of Definition \ref{def:mixedchow} and the diagram in Remark \ref{rem:incidence}. We follow the argument of Zhang \cite{zhang:height} and Kapadia \cite{kapadia}. Consider the Deligne pairing
    \begin{equation}
        \cD:=\left\langle \underline \cL^{(\sum_{i=0}^n r_i)},p^*L_0 + \cO(1)_0^\vee,\dots,p^*L_n + \cO(1)_n^\vee\right\rangle_{X\times\bigbbp^\vee}.
    \end{equation}
    First, applying the induction formula for Deligne pairings to the vanishing locus of each canonical section $s_i$, whose common vanishing locus is definitionally $\Gamma_{\underline L}$ as in Remark \ref{rem:incidence},  we find
    $$\cD=\left\langle\underline \cL^{(\sum_{i=0}^n r_i)}\right\rangle_{\Gamma_{\underline L}},$$
    hence
    \begin{equation}\label{eq:toisom}
    \cD=\left\langle \underline \cL^{(\sum_{i=0}^n r_i)}\right\rangle_{C_{\underline L}}.
    \end{equation}
    Returning to the initial expression of $\cM$ and expanding we find:
    \begin{align}\label{eq:dexpansion}
        \cD&=\left\langle \underline \cL^{(\sum_{i=0}^n r_i)},p^*L_0,\dots,p^*L_n\right\rangle_{X\times\bigbbp^\vee}\\
        &+\sum_{i=0}^n \left\langle\underline \cL^{(\sum_{i=0}^n r_i)},p^*L_0,\dots,p^*L_{i-1},\cO(1)_i^\vee,p^*L_{i+1},\dots,p^*L_n\right\rangle_{X\times\bigbbp^\vee}\\
        &+\text{higher order terms in the pullback from }\bigbbp^\vee.
    \end{align}
    The higher order terms are all trivial by Theorem \ref{thm:proj_formula_line}(3). The first term is seen by Theorem \ref{thm:proj_formula_line}(2) to equal
    \begin{equation}
        D\langle L_0,\dots,L_n\rangle_{X}
    \end{equation}
    with $D=(\cL^{(\sum_{i=0}^n r_i)})$. Terms of the second type will likewise be of the form
    \begin{equation}
        \delta_i\left\langle \underline \cL^{(\sum_{i=0}^n r_i)},\cO(1)_i^\vee\right\rangle_{\bigbbp^\vee}
    \end{equation}
    which, all summed together, yields
    \begin{equation}\label{eq:28}
        \cD=\left\langle \underline \cL^{(\sum_{i=0}^n r_i)}\right\rangle_{C_{\underline L}}=D\langle L_0,\dots,L_n\rangle_{X}+\left\langle \underline \cL^{(\sum_{i=0}^n r_i)+1}\right\rangle_{\bigbbp^\vee},
    \end{equation}
    equivalently
    \begin{equation}\label{eq:end}
        D\langle L_0,\dots,L_n\rangle_{X}=\left\langle \underline \cL^{(\sum_{i=0}^n r_i)}\right\rangle_{C_{\underline L}}-\left\langle \underline \cL^{(\sum_{i=0}^n r_i)+1}\right\rangle_{\bigbbp^\vee}.
    \end{equation}
    We conclude using Remark \ref{rem:hypersurface} applied to (in the notations of the remark) $X=\bigbbp^\vee$, $L=\underline \cL$, $s$ the defining section of $\underline \cL$ for $C_{\underline L}$, and $Z=C_{\underline L}$.
\end{proof}

\subsection{Isometry.}

We denote by $h_i$ the Fubini--Study metric on $L_i$ induced by the embedding $X\hookrightarrow \bbp V_i$, giving a metric $p^*h_i$ on $p^*L_i$, and a Deligne metric $\langle h_0,\dots,h_n\rangle_X$ on the Deligne pairing $\langle L_0,\dots,L_n\rangle_X$. We also endow $\cO_{\bbp V_i^\vee}(1)$ with the Fubini--Study metric, that we denote by $h_i^\vee$. This yields a metric $q^*q_i^*h_i^\vee$ on $\cO(1)^\vee_i$, and
\begin{equation}\label{eq:metric}
h_{\underline L}:=\otimes_{i=0}^n (q^*q_i^*h_i^\vee)^{\otimes \delta_i}
\end{equation}
on $\underline \cL$. Finally, the mixed Chow line $\cC_{\underline L}$ is endowed with the metric $\norm_{h_{\underline L}}$ induced by $h_{\underline L}$ as in \eqref{eq:hypersurface}, using the notation of Remark \ref{rem:hypersurface}.

\begin{theorem}\label{thm:isometry}
    There is an isometry
    $$(\langle L_0,\dots,L_n\rangle_{X},(e^{-C}\langle h_0,\dots,h_n\rangle_X))=(\cC_{\underline L},\norm_{h_{\underline L}}),$$
    where $C$ is a constant independent of $X$.
\end{theorem}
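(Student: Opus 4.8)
The plan is to re-run the proof of Theorem~\ref{thm:mixedisomorphism} with Hermitian metrics carried along at every step, replacing each isomorphism by its metrised refinement from Theorem~\ref{thm:iso}. Endow $p^*L_i$ with $p^*h_i$, $\cO(1)_i^\vee$ with $q^*q_i^*h_i^\vee$, and $\underline\cL$ with the metric $h_{\underline L}$ of \eqref{eq:metric}, so that $\cD$ receives the associated Deligne metric. Of the isomorphisms used in Theorem~\ref{thm:mixedisomorphism}, symmetry and multilinearity (Theorem~\ref{thm:iso}(1)), the projection formulae (Theorem~\ref{thm:iso}(3)), and the hypersurface identity \eqref{eq:hypersurface} of Remark~\ref{rem:hypersurface} all refine to \emph{exact} isometries; only the restriction/induction formula (Theorem~\ref{thm:iso}(2)) carries an explicit scalar, the positive number $\|[s]\|=\exp(-\int\log\|s\|_h\,\omega_1\wedge\cdots\wedge\omega_n)$.

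Feeding these into the chain \eqref{eq:toisom}--\eqref{eq:end}: restricting $\cD$ successively along the canonical sections $s_0,\dots,s_n$ (each step preceded by a symmetry isometry, and using that the Deligne metric is a birational invariant to pass from $\Gamma_{\underline L}$ to $C_{\underline L}$) identifies $(\cD,\,\text{Deligne metric})$ with $(\langle\underline\cL^{(\sum_i r_i)}\rangle_{C_{\underline L}},\,\text{Deligne metric})$ up to the scalar $e^A$, where
\[
A=\sum_{i=0}^n\int_{\Gamma_{(i)}}\log\|s_i\|_{p^*h_i\otimes q^*q_i^*h_i^\vee}\wedge c_1(\underline\cL,h_{\underline L})^{\sum_i r_i}\wedge\prod_{j>i}c_1\big(p^*L_j+\cO(1)_j^\vee\big),
\]
$\Gamma_{(i)}=\{s_0=\dots=s_{i-1}=0\}\subset X\times\bigbbp^\vee$ being the partial incidence loci of Remark~\ref{rem:incidence}. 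On the other hand, expanding $\cD$ as in \eqref{eq:dexpansion} and applying the metrised projection formulae: the leading term is isometric to $(D\langle L_0,\dots,L_n\rangle_X,\,D\langle h_0,\dots,h_n\rangle_X)$; the $n+1$ terms of the second type assemble, via multilinearity, into $\langle\underline\cL^{(\sum_i r_i+1)}\rangle_{\bigbbp^\vee}$ with its Deligne metric; and the higher-order terms contribute only $\cO$ with the trivial metric. Finally the metrised Remark~\ref{rem:hypersurface}, applied to $\bigbbp^\vee$, $\underline\cL$ and the defining section of $C_{\underline L}$, is an exact isometry matching $\langle\underline\cL^{(\sum_i r_i)}\rangle_{C_{\underline L}}-\langle\underline\cL^{(\sum_i r_i+1)}\rangle_{\bigbbp^\vee}$ with $(\cC_{\underline L},\norm_{h_{\underline L}})^{\otimes D}$, where $D=(\underline\cL^{\sum_i r_i})$. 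Chaining everything, the two copies of $\langle\underline\cL^{(\sum_i r_i+1)}\rangle_{\bigbbp^\vee}$ cancel, and extracting $D$-th roots yields the isometry with $e^{-C}=e^{A/D}$.

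The one nontrivial point---and the main obstacle---is that $C=-A/D$ is independent of $X$; I would establish this by computing $A$ fibrewise, in the spirit of \cite{kapadia,zhang:height}. Since the equations defining $\Gamma_{(i)}$ do not involve $\xi_i$, the factor $\bbp V_i^\vee$ sits inside $\Gamma_{(i)}$, and on it $\|s_i\|^2=|\langle\xi_i,v\rangle|^2/(\|v\|^2\|\xi_i\|^2)$ for $v$ a local lift of $\iota_i(x)$. Integrating this factor out against $c_1(\cO(1)_i^\vee)^{r_i}$ and using $\Uni(r_i+1)$-invariance of the Fubini--Study volume (rotate $v$ onto a coordinate axis) produces a universal number $e_{r_i}$ depending only on $r_i$; the residual integral over $X$ and over the remaining $\bbp V_j^\vee$ factors over the product, and by $\int_X\prod\omega_j=(\prod L_j)$ collapses to the single intersection number $\delta_i$ times a multinomial coefficient. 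Dividing the resulting sum over $i$ by $D=\binom{\sum_i r_i}{r_0,\dots,r_n}\prod_k\delta_k^{r_k}$ leaves $A/D=\sum_{i=0}^n e_{r_i}\delta_i$, so that
\[
C=-\sum_{i=0}^n e_{r_i}\,\delta_i,
\]
a quantity determined by $n$, the $r_i=\dim H^0(X,L_i)-1$, and the $\delta_i$ alone---hence constant under deformation of $X$ and, in particular, independent of $X$. (The $\Aut(X,\underline L)$-equivariance, while not asserted in the statement, follows from that of Theorem~\ref{thm:mixedisomorphism} and Remark~\ref{rem:hypersurface}.)
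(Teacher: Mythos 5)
Your proposal is correct and follows essentially the same route as the paper: carry the metrics of Theorem~\ref{thm:iso} through the proof of Theorem~\ref{thm:mixedisomorphism}, observe that all the isomorphisms involved are exact isometries except the successive restrictions along the $s_i$, and evaluate the resulting scalar discrepancies fibrewise over the $\bbp V_i^\vee$ factors using unitary invariance of Fubini--Study, in the manner of Stoll and Zhang. The only difference is one of presentation: the paper outsources that final fibrewise computation to \cite{kapadia:thesis,zhang:height,stoll,chen:moriwakiintersection}, whereas you sketch it explicitly, and your reading of ``independent of $X$'' as ``determined by $n$, the $r_i$ and the $\delta_i$'' is exactly what those references (and hence the paper's proof) deliver.
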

\begin{proof}
As discussed in Remark \ref{rem:hypersurface}, there is an isometry $$\cC_{\underline L}=\left\langle \underline \cL^{(\sum_{i=0}^n r_i)}\right\rangle_{C_{\underline L}}-\left\langle \underline \cL^{(\sum_{i=0}^n r_i)+1}\right\rangle_{\bigbbp^\vee}.$$ It therefore suffices to show that there is also an isometry in \eqref{eq:end}, that is:
\begin{equation}
    \left\langle \underline \cL^{(\sum_{i=0}^n r_i)}\right\rangle_{C_{\underline L}}-\left\langle \underline \cL^{(\sum_{i=0}^n r_i)+1}\right\rangle_{\bigbbp^\vee}=\langle L_0,\dots,L_n\rangle.
\end{equation}
Beginning at \eqref{eq:dexpansion}, since the multilinearity, symmetry, and projection isomorphisms are isometries by Theorem \ref{thm:iso}, we know that \eqref{eq:end} is isometric as long as we do not replace $\cD$ in the expression \eqref{eq:28}. That is,
\begin{equation}
    \langle L_0,\dots,L_n\rangle_{X}=\cD-\left\langle \underline \cL^{(\sum_{i=0}^n r_i)+1}\right\rangle_{\bigbbp^\vee}
\end{equation}
is an isometry. Thus, we are only left to show that \eqref{eq:toisom} extends to an isometry, i.e.\ that 
\begin{equation}\label{eq:toisom2}
    \cD=\left\langle \underline \cL^{(\sum_{i=0}^n r_i)}\right\rangle_{C_{\underline L}}
\end{equation}
is isometric. This follows from an induction statement as in \cite[Section 3.2]{kapadia:thesis} and the restriction isometry formula. Namely, letting $\Gamma_i$ be as in Remark \ref{rem:incidence}, we can define the Deligne pairings
\begin{equation}
    \cD_i:=\left\langle \underline \cL^{(\sum_{i=0}^n r_i)},p^*L_i + \cO(1)_0^\vee,\dots,p^*L_n + \cO(1)_n^\vee\right\rangle_{\Gamma_i}
\end{equation}
endowed with the metrics
\begin{equation}
    H_i:=\left\langle h_{\underline L}^{(\sum_{i=0}^n r_i)},p^*h_i\otimes q^*q_i^*h_i^\vee,\dots,p^*h_n\otimes q^*q_n^*h_n^\vee\right\rangle_{\Gamma_i}.
\end{equation}
By repeated induction, the restriction formula Theorem \ref{thm:iso}(2) shows that there is an isometry for each $i$, 
$$(\cD_{i-1},H_{i-1})|_{\Gamma_i}=(\cD_i,H_i e^{-f_i})$$
hence an isometry
$$(\cD,\Omega)=\left(\Gamma_{\underline L},\left\langle h_{\underline L}^{((\sum_{i=0}^n r_i))}\right\rangle_{\Gamma_{\underline L}}e^{-f}\right),$$
with
$$f=\sum_{i=0}^n f_i.$$
Arguing as in \cite[(3.16)-(3.25)]{kapadia:thesis} (see also \cite[p. 85]{zhang:height}, \cite{stoll}, \cite[Lemma 3.8.6]{chen:moriwakiintersection}), one may compute the $f_i$ to show $f$ to be a universal constant, proving the result.
\end{proof}

\section{Application to mixed Monge--Ampère equations.}\label{sec3}

\subsection{Mixed Monge--Ampère equations and Deligne pairings.}\label{sec31}

Throughout, we fix two ample line bundles $L$, $H$ on $X$. Let $\chi$ be a fixed Kähler form in $c_1(H)$. We consider the following mixed Monge--Ampère equation (or \textit{inverse $\sigma_k$-equation} as in \cite{colsze:jflow}) seeking a Kähler $\omega\in c_1(L)$ such that
\begin{equation}\label{eq:eqn2}
    \sum_{i=1}^n c_i \chi^i\wedge\omega^{n-i}=C\omega^n
\end{equation}
for some fixed nonnegative constants $c_i$ (which will be integers for us), and the necessary cohomological constant $C$ which is fully determined by $\underline c=(c_1,\dots,c_n)$. Setting $c_i=0$ for $i>1$ we get the usual J-equation
$$\chi\wedge\omega^{n-1}=C\omega^n.$$
There are various functionals related to the equation \eqref{eq:eqn2}. Given $\phi$ such that $\omega_\phi=\omega+dd^c\phi$ is a Kähler form in $c_1(L)$, and defining $V=(L^n)$, we set
\begin{align}
    E(\phi):=&\,\frac{1}{V(n+1)}\sum_{i=0}^n\int_X \phi \omega_\phi^i\wedge\omega_\phi^{n-i},\\
    J_{\chi,\underline c}(\phi)=&\,V\mi\sum_{j=1}^n \left(\frac{c_j}{n+1-j} \sum_{i=0}^{n-j}\phi\chi^j\wedge\omega_\phi^i\wedge\omega_0^{n-j-i}\right) - C\cdot E(\phi),\\
    J_{\chi}(\phi):=J_{\chi,(1,0,\dots,0)}=&\,\frac{1}{Vn} \sum_{i=0}^{n-1}\phi\chi\wedge\omega_\phi^i\wedge\omega_0^{n-1-i} - C\cdot E(\phi),\\
    J(\phi):=&\,V\mi\int_X \phi\omega_{0}^n - E(\phi).
\end{align}
Using the change of metric formul\ae, it turns out we may rewrite our functionals as Deligne pairings:
\begin{align}
    E(\phi)=&\,\frac{1}{V(n+1)}(\langle \omega_\phi^{(n+1)}\rangle_X-\langle \omega_0^{(n+1)}\rangle_X),\\
    J_{\chi,\underline c}(\phi)=&\,V\mi\sum_{j=1}^n \left(\frac{c_j}{n+1-j} (\langle\chi^{(j)},\omega_\phi^{(n+1-j)}\rangle_X-\langle\chi^{(j)},\omega_0^{(n+1-j)}\rangle_X)\right) - C\cdot E(\phi),\\
    J_{\chi}(\phi)=&\,\frac{1}{Vn} (\langle\chi,\omega_\phi^{(n)}\rangle_X-\langle\chi,\omega_0^{(n)}\rangle_X) - C\cdot E(\phi),\\
    J(\phi)=&\,V\mi(\langle \omega_\phi,\omega_0^{(n)}\rangle_X-\langle \omega_0^{(n+1)}\rangle_X)- E(\phi).
\end{align}

We now state two essential properties of $J_{\chi,\underline c}$ due to Collins--Szekelyhidi.
\begin{theorem}[{\cite[Theorem 5, Proposition 23]{colsze:jflow}}]\label{thm:colsze}
    The two following results hold.
    \begin{enumerate}
        \item If there exists a solution to \eqref{eq:eqn2}, then there exists $\varepsilon>0,\delta$ such that
        \begin{equation}\label{eq:properness}
            J_{\chi,\underline c}>\varepsilon J-\delta
        \end{equation}
        on $\cH$.
        \item If \eqref{eq:properness} holds for $J_{\chi,\underline c}$, then it holds for $J_{\chi',\underline c}$ where $\chi'$ is any Kähler form in $[\chi]$.
    \end{enumerate}
\end{theorem}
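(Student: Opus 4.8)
The plan is to treat the two statements separately: part (1) combines the variational structure of $J_{\chi,\underline c}$ with the analytic input of \cite{colsze:jflow}, while part (2) is a soft consequence of the change-of-metric formula. \emph{Variational structure and boundedness below.} First I would record that \eqref{eq:eqn2} is the Euler--Lagrange equation of $J_{\chi,\underline c}$: differentiating the Deligne-pairing expressions for $E$ and $J_{\chi,\underline c}$ along a smooth path $\phi_t$ and applying \eqref{eq:changemetric} to each slot carrying $\omega_{\phi_t}$ gives
$$\frac{d}{dt}J_{\chi,\underline c}(\phi_t)=\frac1V\int_X\dot\phi_t\Bigl(\sum_{i=1}^n c_i\,\chi^i\wedge\omega_{\phi_t}^{n-i}-C\,\omega_{\phi_t}^{n}\Bigr),$$
so a Kähler solution $\omega_*=\omega_{\phi_*}$ of \eqref{eq:eqn2} is exactly a critical point of $J_{\chi,\underline c}$ on $\cH$. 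Next, using that Deligne pairings of semipositive metrics are semipositive (Elkik \cite{elkik:2}): along a subgeodesic $\Phi$ over an annulus, the curvature of the Deligne metric of $\langle\chi^{(j)},\omega_\phi^{(n+1-j)}\rangle$ is the fibre integral of $\chi^{j}\wedge(\omega_0+dd^c\Phi)^{n+1-j}\geq 0$, so each $\phi\mapsto\langle\chi^{(j)},\omega_\phi^{(n+1-j)}\rangle$ is convex along geodesics; as $E$ is affine along geodesics and $c_j\geq 0$, $C\geq 0$, the functional $J_{\chi,\underline c}$ is geodesically convex. Running the geodesic from $\phi_*$ to an arbitrary $\phi$, whose initial slope vanishes by the critical-point equation, yields $J_{\chi,\underline c}(\phi)\geq J_{\chi,\underline c}(\phi_*)$, i.e.\ $J_{\chi,\underline c}$ is bounded below on $\cH$.

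\emph{Part (1): upgrading to properness.} To pass from boundedness below to the estimate \eqref{eq:properness}, the key additional input is that solvability of \eqref{eq:eqn2} is an \emph{open} condition in the class $[\chi]$ --- which is exactly what the a priori estimates of \cite{colsze:jflow} (after Song--Weinkove) and the accompanying uniform Nakai--Moishezon positivity provide. I would then perturb: replacing $\chi$ by $t\chi$ with $t<1$ close to $1$ keeps $t\chi$ Kähler, changes the cohomological constant to some $C_t$, and by openness keeps \eqref{eq:eqn2} solvable, so by the previous step $J_{t\chi,\underline c}\geq-\delta_t$ on $\cH$. On the other hand, multilinearity of Deligne pairings and \eqref{eq:changemetric} express $J_{\chi,\underline c}-J_{t\chi,\underline c}$ as a positive combination (with coefficients comparable to $1-t$) of mixed energies $\phi\mapsto\sum_m\int_X\phi\,\chi^{j}\wedge\omega_\phi^{m}\wedge\omega_0^{n-j-m}$, plus the affine-in-$\phi$ term $(C-C_t)(-E)$; the uniform Nakai--Moishezon positivity (Datar--Pingali \cite{datarpingali}, Song \cite{song:nakai}, and for the J-flow \cite{colsze:jflow}) bounds this difference below by $c(1-t)J-C'$. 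Combining, $J_{\chi,\underline c}\geq c(1-t)J-(C'+\delta_t)$ on $\cH$, which is \eqref{eq:properness} with $\varepsilon=c(1-t)$.

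\emph{Part (2).} Write $\chi'=\chi+dd^c\psi$ with $\psi$ smooth, hence bounded. The constant $C$ depends only on $\underline c$ and the intersection numbers $(L^i\cdot H^{n-i})$, so it is unchanged and the $-C\cdot E$ terms cancel in $J_{\chi',\underline c}-J_{\chi,\underline c}$. Iterating \eqref{eq:changemetric} over the $j$ copies of $\chi$ gives
$$\langle\chi'^{(j)},\omega_\phi^{(n+1-j)}\rangle-\langle\chi^{(j)},\omega_\phi^{(n+1-j)}\rangle=\sum_{l=0}^{j-1}\int_X\psi\,\chi^{l}\wedge\chi'^{j-1-l}\wedge\omega_\phi^{n+1-j},$$
and likewise with $\omega_\phi$ replaced by the $\phi$-independent reference $\omega_0$. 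As $\chi,\chi',\omega_\phi\geq 0$, each integral is at most $\sup_X|\psi|\cdot(H^{j-1}\cdot L^{n+1-j})$ in absolute value, uniformly in $\phi$; summing over $j$, $|J_{\chi',\underline c}(\phi)-J_{\chi,\underline c}(\phi)|\leq M$ for a constant $M=M(\psi,\underline c,X)$, whence $J_{\chi',\underline c}\geq J_{\chi,\underline c}-M\geq\varepsilon J-(\delta+M)$.

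\emph{Main obstacle.} The genuine difficulty is the upgrade in part (1): geodesic convexity alone only yields boundedness below, and turning the energy difference $J_{\chi,\underline c}-J_{t\chi,\underline c}$ into a bona fide multiple of Aubin's $J$ cannot be done by a crude pointwise comparison $\chi\leq b\,\omega_0$ (which loses too much); it uses in an essential way the uniform Nakai--Moishezon positivity extracted from the a priori estimates of \cite{colsze:jflow}. Part (2), by contrast, is entirely formal.
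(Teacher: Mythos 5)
A preliminary remark: the paper does not prove Theorem \ref{thm:colsze} at all --- it is imported verbatim from Collins--Szekelyhidi \cite[Theorem 5, Proposition 23]{colsze:jflow} and used as a black box --- so there is no internal proof to compare against, and your attempt has to be judged against the source. Your part (2) is correct and is essentially the standard argument: telescoping the change-of-metric formula \eqref{eq:changemetric} over the $\chi$-slots and using that the resulting mixed measures are positive with cohomologically fixed total mass bounds $|J_{\chi',\underline c}-J_{\chi,\underline c}|$ on $\cH$ by a constant depending only on $\sup_X|\psi|$ and intersection numbers; this is the same mechanism as the unnumbered $C^0$-estimate Lemma of Section \ref{sect1}. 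Your opening step of part (1) (Euler--Lagrange identification, geodesic convexity of each Deligne-pairing term via positivity of the fibre integral, hence boundedness below at a critical point) is also sound in outline, modulo the usual $C^{1,1}$-regularity caveats for weak geodesics that you gloss over.

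The genuine gap is in part (1), at precisely the step you yourself flag as the main obstacle. The perturbation $\chi\mapsto t\chi$ cannot produce Aubin's $J$: by multilinearity of the Deligne pairings and the formula $C=V^{-1}\sum_i c_i (H^i\cdot L^{n-i})$, one computes exactly $J_{\chi,\underline c}-J_{t\chi,\underline c}=J_{\chi,\underline c''}$ with $c''_j=(1-t^j)c_j$, i.e.\ the difference is again a functional of the very type under study, with no slot carrying the class $c_1(L)$. Bounding $J_{\chi,\underline c''}$ below by $c(1-t)J-C'$ is therefore an instance of the properness statement you are trying to prove, and invoking ``uniform Nakai--Moishezon positivity'' to supply it is question-begging: the implication from the uniform numerical condition to coercivity is the deep content of \cite{datarpingali,song:nakai,gaochen} (and, were it available as an input, part (1) would follow in one line with no need for convexity or perturbation at all). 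The perturbation that actually works in \cite{colsze:jflow} must inject the polarisation class itself --- replacing $\chi$ by a form differing from it by a small negative multiple of a Kähler form in $c_1(L)$, so that the difference of functionals telescopes to an $(I-E)$-type quantity genuinely comparable to $J$ --- combined with openness of solvability at the given solution via the implicit function theorem. As written, your part (1) does not close.
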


\begin{remark}
    The expression of $J_{\chi,\underline c}$ on \cite[p.19, before Proposition 23]{colsze:jflow}, where it is defined as a normalised primitive, is not the same as our explicit expression in terms of mixed Monge--Ampère measures and Deligne pairings, but we leave it to the interested reader to check that a simple (but tedious) integration by parts argument shows both to be equal.
\end{remark}

\begin{remark}\label{rem:reduction}The result (2) above, stating that coercivity is independent of the choice of a reference $\chi$ in its class, will be essential for our approach: indeed, we will need to reduce to the case where $\chi$ is a (rescaled) Fubini--Study metric induced by the embedding by sections of a very ample power of $H$.
\end{remark}

\subsection{Stability of pairs.}

The next two sections review results from \cite{paul:hyperdiscriminants} and \cite{dr:3}. We will be brief in our exposition, and more details are given therein. Let $V,W$ be two finite-dimensional representations of a complex linear algebraic group $G$. 

Let $\field=\bbc((t))$, and define an \textit{arc} in $G$ to be a $\field$-point $\rho\in G(\field)$ -- analytically, this corresponds to a meromorphic map at zero $\bbc^*\to G$ from a small punctured disc. In particular, a one-parameter subgroup of $G$ is an arc.

Given $(v,w)\in V\oplus W$, we can see such a pair as a point in the ground field extension $(V\oplus W)_\field$, so that if $\rho$ is an arc, then also $\rho\cdot(v,w)\in (V\oplus W)_\field$. Decomposing this new vector in a basis extended from $V\oplus W$ and evaluating each component according to the order of vanishing at zero, as in \cite[Definition 3.3]{dr:3}, one defines a \textit{weight} $\nu(\rho,[v,w])$. One then has that:
\begin{theorem}[{\cite[Corollary 3.18]{dr:3}}]\label{thm:sstabeq}
    Let $(v,w)\in V\oplus W$, with both $v$ and $w$ nonzero, and denote by $[v,w]$ the corresponding point in $\bbp(V\oplus W)$. The following are equivalent:
    \begin{enumerate}
        \item the pair $[v,w]$ is \textit{semistable}, which definitionally \cite{paul:hyperdiscriminants} means $$\overline{G\cdot[v,w]}\cap \bbp(0\oplus W)=\emptyset;$$
        \item the pair $[v,w]$ is \textit{numerically semistable}, i.e.\ for all arcs $\rho\in G(\field)$,
        $$\nu(\rho,[v,w])\geq 0;$$
        \item the pair $[v,w]$ is \textit{analytically semistable}, i.e.\ for a pair (hence any pair) of Hermitian norms on $V$ and $W$, there exists $\delta\in\bbr$ such that for all $g\in G$,
        $$\log\norm[g\cdot v]-\log\norm[g\cdot w]\geq -\delta.$$
    \end{enumerate}
\end{theorem}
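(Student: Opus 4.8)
The plan is to run the classical Hilbert--Mumford/Kempf--Ness dictionary in the setting of pairs, the essential point being that everything is phrased in terms of arcs rather than one-parameter subgroups, so that no reductivity of $G$ and no Cartan decomposition over $\field$ is required; as recalled in the text, insisting on one-parameter subgroups would in fact fail (Paul--Sun--Zhang). I would prove $(1)\Leftrightarrow(3)$ and $(1)\Leftrightarrow(2)$ separately.

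For $(1)\Leftrightarrow(3)$, the ``topological'' equivalence, first I would note that any two Hermitian norms on $V$, resp.\ $W$, are comparable up to a uniform multiplicative constant, so that $\inf_{g\in G}(\log\norm[g\cdot v]-\log\norm[g\cdot w])$ is finite or $-\infty$ independently of the choice, which is the content of ``a pair (hence any pair)''. If $(3)$ fails, take $g_k$ with $\log\norm[g_k v]-\log\norm[g_k w]\to-\infty$, set $v_k=g_k v/\norm[g_k w]$ and $w_k=g_k w/\norm[g_k w]$, so $\norm[w_k]=1$ and $\norm[v_k]\to0$; by compactness of the product of unit spheres a subsequence of $(v_k,w_k)$ converges to some $(0,w_\infty)$ with $w_\infty\ne0$, hence $[0:w_\infty]\in\overline{G\cdot[v,w]}\cap\bbp(0\oplus W)$ and $(1)$ fails. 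Conversely, since the orbit $G\cdot[v,w]$ is locally closed it is open and dense in its closure, so any point of $\overline{G\cdot[v,w]}\cap\bbp(0\oplus W)$ is an analytic limit $\lim_k g_k\cdot[v,w]$; lifting so that the $W$-parts converge to a nonzero vector forces $\norm[g_k v]/\norm[g_k w]\to0$, and $(3)$ fails.

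For $(1)\Leftrightarrow(2)$ I would use the valuative criterion. Given an arc $\rho\in G(\field)$, the $\field$-point $\rho\cdot[v,w]$ of $\bbp(V\oplus W)$ extends uniquely (by properness) to a $\bbc[[t]]$-point, obtained by clearing the common denominator of $\rho\cdot(v,w)$, whose special value $p_\rho$ satisfies $p_\rho\in\bbp(0\oplus W)$ exactly when $\nu(\rho,[v,w])<0$, i.e.\ when the $V$-component of $\rho\cdot(v,w)$ vanishes to strictly higher order than the $W$-component; and $p_\rho$, being a specialisation of a $\field$-point of the orbit, lies in $\overline{G\cdot[v,w]}$. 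This gives $(1)\Rightarrow(2)$ directly. For $(2)\Rightarrow(1)$, assuming $p\in\overline{G\cdot[v,w]}\cap\bbp(0\oplus W)$, I would choose an irreducible curve through $p$ inside the orbit closure and meeting the orbit, replace it by a component of its preimage in $G$ under the orbit map $g\mapsto g\cdot[v,w]$ dominating it, pick $q$ in the closure of that component with $q\cdot[v,w]=p$, and normalise and complete at $q$ to get an arc $\rho\in G(\field)$ with $\lim_{t\to0}\rho(t)\cdot[v,w]=p$; thus $\nu(\rho,[v,w])<0$ and $(2)$ fails. As a cross-check, along any arc $\log\norm[\rho(t)v]-\log\norm[\rho(t)w]$ grows like a positive multiple of $-\nu(\rho,[v,w])\log|t|$ up to a bounded term, which also directly yields $(3)\Rightarrow(2)$ and matches the non-archimedean picture.

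The main obstacles are ``arithmetic'' in nature. First, in $(2)\Rightarrow(1)$ one must ensure the selected curve through $p$ genuinely lifts to a $\field$-point of $G$, not merely of a torsor: I would handle this by lifting inside $G$ itself (a component of the preimage), after possibly a finite base change $t\mapsto t^e$ that leaves $\field$ unchanged. Second, translating between the purely formal weight $\nu(\rho,[v,w])$ and convergent analytic asymptotics needs one to approximate a formal arc by an actual meromorphic map $\bbc^*\to G$ agreeing to sufficiently high order, legitimate by Artin--Greenberg-type approximation since $G$ is smooth; only finitely many leading coefficients matter, as $\nu$ depends only on the leading orders of $\rho\cdot v$ and $\rho\cdot w$. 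The conceptually delicate point --- and the reason this is not a routine corollary of classical GIT --- is that one must resist replacing $\rho$ by a one-parameter subgroup: the Cartan-decomposition step accomplishing this in Mumford's argument breaks down for pairs.
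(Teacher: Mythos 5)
The paper does not actually prove this statement: it is quoted verbatim from the author's earlier work with Dervan (\cite[Corollary 3.18]{dr:3}) and used as a black box, so there is no internal proof to compare against. Judged on its own, your argument is essentially the right one and, as far as one can tell, follows the same route as the cited reference: $(1)\Leftrightarrow(3)$ by the standard compactness/orbit-closure dictionary (valid here because Zariski and Euclidean closures of a constructible orbit agree), and $(1)\Leftrightarrow(2)$ by the valuative criterion of properness for $\bbp(V\oplus W)$, with your sign convention for $\nu$ consistent with the slope formula in Theorem \ref{thm:slope_pair}. Your closing remark correctly identifies the conceptual point: one cannot reduce arcs to one-parameter subgroups (Paul--Sun--Zhang), which is precisely why \cite{dr:3} works with $G(\field)$ throughout.

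Two small points to tighten. In $(2)\Rightarrow(1)$, the phrase ``pick $q$ in the closure of that component with $q\cdot[v,w]=p$'' cannot be taken literally, since $p$ is not in the orbit and so $q$ cannot lie in $G$; the correct formulation is to complete the lifted curve at a boundary point of a compactification of $G$ and take the induced $\bbc((s))$-point of $G$ (possibly after the finite base change you mention), whose limit in $\bbp(V\oplus W)$ is $p$. This arc-lifting step is the genuine technical content of the cited corollary, so it deserves to be carried out rather than sketched. Second, the equivalence of the formal weight with the analytic asymptotics is exactly the slope formula (Theorem \ref{thm:slope_pair}), so your Artin-approximation remark, while not wrong, can be replaced by a direct appeal to that statement.
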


We also state the following \textit{slope formula}, which will be useful to us later on:
\begin{theorem}[{\cite[Lemma 3.16]{dr:3}}]\label{thm:slope_pair}
    Let $\rho$ be an arc in $G$, seen as a meromorphic map $\bbd^*\ni z\mapsto \rho(z)\in G$. Then as $|z|\to 0$,
    $$\log\|\rho(z)\cdot v\|-\log\|\rho(z)\cdot w\|=\nu(\rho,[v,w])\log|z|\mi + O(1).$$
\end{theorem}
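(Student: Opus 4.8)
The plan is to reduce to a single representation and then subtract. The core step is the following asymptotic for a finite-dimensional representation $V$ of $G$ with a Hermitian norm $\norm$ and a nonzero $v\in V$: writing $\rho$ for the arc, seen as a holomorphic map $\bbd^*\to G$ on a punctured disc $\{0<|z|<r\}$ acting on $V$, one has
$$\log\norm[\rho(z)\cdot v]=\ord_t(\rho\cdot v)\,\log|z|+O(1)\qquad(|z|\to 0),$$
where $\rho\cdot v\in V\otimes_\bbc\field$ and $\ord_t$ is the $t$-adic valuation $\ord_t(u)=\sup\{k:u\in t^k(V\otimes_\bbc\ring)\}$, which is basis-independent and finite since $\rho$ is invertible over $\field$ and $v\neq 0$. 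Granting this applied to $(V,v)$ and to $(W,w)$ and subtracting,
$$\log\norm[\rho(z)\cdot v]-\log\norm[\rho(z)\cdot w]=\bigl(\ord_t(\rho\cdot w)-\ord_t(\rho\cdot v)\bigr)\log|z|\mi+O(1),$$
using $\log|z|=-\log|z|\mi$. It then remains only to identify $\nu(\rho,[v,w])$ with $\ord_t(\rho\cdot w)-\ord_t(\rho\cdot v)$, which is exactly what the recipe of \cite[Definition 3.3]{dr:3} computes for the vector $\rho\cdot(v,w)\in(V\oplus W)\otimes\field$ decomposed along the grading $V\oplus W$; the sign is the one forced by compatibility with Theorem \ref{thm:sstabeq}, since $\log|z|\mi\to+\infty$ makes the displayed equality match "$\nu(\rho,[v,w])\geq 0$ for every arc" with "$\log\norm[g\cdot v]-\log\norm[g\cdot w]$ bounded below".

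For the core asymptotic, fix a basis of $V$; then $z\mapsto\rho(z)\cdot v$ is a vector-valued holomorphic function on $\{0<|z|<r\}$ whose coordinates are convergent Laurent series, not all identically zero. Letting $k_0:=\ord_t(\rho\cdot v)\in\Z$ be the least order of vanishing among those coordinates, write $\rho(z)\cdot v=z^{k_0}\sum_{j\geq 0}z^j v_j$ with $v_j\in V$, $v_0\neq 0$, the power series convergent on $\{|z|<r\}$. Then $\norm[\rho(z)\cdot v]=|z|^{k_0}\norm[v_0+zv_1+\cdots]$, and the continuous function $z\mapsto\norm[v_0+zv_1+\cdots]$ takes the positive value $\norm[v_0]$ at $z=0$, hence is bounded above and below by positive constants on a smaller disc; taking logarithms gives the claim. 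Equivalence of norms on a finite-dimensional space makes the $O(1)$, and thus the whole statement, independent of the chosen Hermitian norms on $V$ and $W$.

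The substance of the argument is entirely in this last elementary estimate; the remainder is bookkeeping: confirming $k_0$ is intrinsic, matching it to the weight of \cite[Definition 3.3]{dr:3} with the correct sign, and checking that the "$O(1)$" is genuinely bounded — which uses crucially that $\rho$ is an honest meromorphic map (so its coordinate Laurent series converge on a disc), not merely a formal $\field$-point. A more structural but heavier alternative would invoke the Cartan decomposition $G(\field)=G(\ring)\,\Lambda\,G(\ring)$ for reductive $G$ to reduce to the case of a cocharacter $\lambda$, where $\rho(z)\cdot v$ is literally a finite Laurent polynomial and the two $G(\ring)$-factors are absorbed into the $O(1)$; the direct Laurent-series argument above is shorter and does not require $G$ reductive.
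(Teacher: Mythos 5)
Your proof is correct. Be aware, though, that the paper contains no proof of this statement: it is imported wholesale from \cite[Lemma 3.16]{dr:3}, so the only in-paper ``proof'' is the citation. Your direct argument --- expand $\rho(z)\cdot v$ as a convergent Laurent series, factor out $z^{k_0}$ with $k_0=\ord_t(\rho\cdot v)$, note that the remaining factor tends to the nonzero vector $v_0$ so its norm is bounded above and below by positive constants near $0$, then subtract the two single-representation asymptotics --- is the natural elementary route and is sound; it is essentially the order-of-vanishing computation underlying the cited lemma, while avoiding the Cartan-decomposition reduction to one-parameter subgroups that you correctly describe as a heavier alternative. Two remarks. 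First, the one step you cannot fully justify from the present paper alone is the identification $\nu(\rho,[v,w])=\ord_t(\rho\cdot w)-\ord_t(\rho\cdot v)$: you pin down the sign by demanding consistency with Theorem \ref{thm:sstabeq} rather than by unwinding \cite[Definition 3.3]{dr:3}. The identification is in fact right --- the action on $V\oplus W$ is diagonal, so the graded components of $\rho\cdot(v,w)$ are exactly $\rho\cdot v$ and $\rho\cdot w$, and the weight is the difference of their $t$-adic orders with your sign --- but reverse-engineering a definition from a theorem it is meant to feed into is logically fragile, and you should state that you are verifying consistency rather than deriving the definition. Second, you rightly flag that the $O(1)$ requires the coordinate Laurent series to converge, which is exactly what the hypothesis that $\rho$ is ``seen as a meromorphic map'' provides; this is the correct reading of the statement, and it is the point where a merely formal $\bbc((t))$-point would not suffice.
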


The notion of \textit{stability} of pairs is more subtle, and requires a third representation: embed $G$ into $\GL(q)$ for some $q$ large enough. This gives a $G$-action on $\bbc^{q^2}$. Let $e$ be the identity in $\GL(q)\subset \bbc^{q^2}$, and let $d$ be the degree of the representation in the sense of \cite[(2.6)]{paul:hyperdiscriminants}. To define the numerical criterion, we also need the so-called \textit{norm} of $\rho$ with respect to $v$:
\begin{equation}\label{eq:norm}
    \|(\rho,v)\|:=\nu(\rho,[v,e^{\otimes d}]).
\end{equation}
\begin{theorem}[{\cite[Corollary 3.22]{dr:3}}]\label{thm:stabeq}
        Let $k>0$ be a positive integer. It is equivalent for a pair $[v,w]$ to be:
        \begin{enumerate}
            \item \textit{stable}, i.e.\ the pair $[e^{\otimes d}\otimes v^{\otimes k},w^{\otimes k+1}]$ is semistable;
            \item \textit{numerically stable}, i.e.\ for all arcs $\rho\in G(\field)$,
            $$\nu(\rho,[v,w])\geq (k+1)\mi \|(\rho,v)\|;$$
            \item \textit{analytically stable}, i.e.\ for a triple (hence any triple) of Hermitian norms on $V$, $W$, and $\bbc^{q^2}$, there exists $\delta\in\bbr$ such that for all $g\in G$,
            $$\log \|g\cdot v\|-\log\|g\cdot w\|\geq (k+1)\mi (d\log\|g\|-\log\|g\cdot v\|)-\delta.$$
        \end{enumerate}
     When it is needed to specify the integer $k$, we will say it is $\varepsilon$-stable (resp. numerically stable, analytically stable) for $\varepsilon=k\mi$. 
\end{theorem}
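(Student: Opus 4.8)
The plan is to reduce everything to the semistability equivalence of Theorem~\ref{thm:sstabeq}, applied not to $[v,w]$ but to the auxiliary pair $[e^{\otimes d}\otimes v^{\otimes k},\,w^{\otimes k+1}]\in\bbp(V'\oplus W')$, where $V':=(\bbc^{q^2})^{\otimes d}\otimes V^{\otimes k}$ and $W':=W^{\otimes k+1}$. Since $e$, $v$, $w$ are all nonzero, both components of the auxiliary pair are nonzero, so Theorem~\ref{thm:sstabeq} applies to it; and by the very definition of stability in (1), the pair $[v,w]$ is stable precisely when this auxiliary pair is semistable. It therefore remains to check that the \emph{numerical}, resp.\ \emph{analytic}, semistability of the auxiliary pair given by Theorem~\ref{thm:sstabeq}(2), resp.\ (3), is literally condition (2), resp.\ (3), of the statement. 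Everything past this is bookkeeping.

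For the numerical part, I would first record the weight identity
$$\nu\bigl(\rho,[e^{\otimes d}\otimes v^{\otimes k},w^{\otimes k+1}]\bigr)=(k+1)\,\nu(\rho,[v,w])-\|(\rho,v)\|$$
for every arc $\rho\in G(\field)$, where $\|(\rho,v)\|=\nu(\rho,[v,e^{\otimes d}])$ as in \eqref{eq:norm}. To prove it, equip $V'$ and $W'$ with the Hermitian tensor-product norms built from fixed norms on $\bbc^{q^2}$, $V$ and $W$ (this is harmless, as $\nu$ does not depend on the choice of norms); then $\|\rho(z)\cdot(e^{\otimes d}\otimes v^{\otimes k})\|=\|\rho(z)\cdot e\|^{d}\|\rho(z)\cdot v\|^{k}$ and $\|\rho(z)\cdot w^{\otimes k+1}\|=\|\rho(z)\cdot w\|^{k+1}$, whence
$$\log\|\rho(z)\cdot(e^{\otimes d}\otimes v^{\otimes k})\|-\log\|\rho(z)\cdot w^{\otimes k+1}\|=\bigl(\log\|\rho(z)\cdot e^{\otimes d}\|-\log\|\rho(z)\cdot v\|\bigr)+(k+1)\bigl(\log\|\rho(z)\cdot v\|-\log\|\rho(z)\cdot w\|\bigr),$$
and the slope formula, Theorem~\ref{thm:slope_pair}, identifies the coefficients of $\log|z|\mi$ in the two bracketed expressions as $-\nu(\rho,[v,e^{\otimes d}])=-\|(\rho,v)\|$ and $(k+1)\nu(\rho,[v,w])$. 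Granting the identity, Theorem~\ref{thm:sstabeq}(2) for the auxiliary pair reads $(k+1)\nu(\rho,[v,w])-\|(\rho,v)\|\ge 0$ for all arcs, i.e.\ $\nu(\rho,[v,w])\ge(k+1)\mi\|(\rho,v)\|$, which is condition (2); so Theorem~\ref{thm:sstabeq}(1)$\Leftrightarrow$(2) gives (1)$\Leftrightarrow$(2) here.

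For the analytic part I would run the same computation with an honest element $g\in G$ in place of the arc value $\rho(z)$: with the tensor-product norms, $\log\|g\cdot(e^{\otimes d}\otimes v^{\otimes k})\|-\log\|g\cdot w^{\otimes k+1}\|=d\log\|g\|+k\log\|g\cdot v\|-(k+1)\log\|g\cdot w\|$, where $\|g\|=\|g\cdot e\|$ is the norm of $g$ in $\bbc^{q^2}$. Theorem~\ref{thm:sstabeq}(3) for the auxiliary pair (its statement holding for any triple of Hermitian norms, in particular these tensor-product ones) says semistability amounts to the existence of $\delta$ with $d\log\|g\|+k\log\|g\cdot v\|-(k+1)\log\|g\cdot w\|\ge-\delta$ for all $g$; dividing by $k+1$ and rearranging produces exactly the inequality of (3). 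As the three conditions for the auxiliary pair are mutually equivalent by Theorem~\ref{thm:sstabeq}, so are (1), (2) and (3) for $[v,w]$.

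There is no analytic or geometric difficulty in this argument; the one point that genuinely needs care is the normalisation. One must use multiplicative tensor-product Hermitian norms on $(\bbc^{q^2})^{\otimes d}\otimes V^{\otimes k}$ and $W^{\otimes k+1}$ so that logarithms split additively, and must keep the degree $d$ of the $\GL(q)$-representation straight: it is precisely $d$ that makes $\|g\cdot e^{\otimes d}\|=\|g\|^{d}$, so that the term $d\log\|g\|$ in (3) and the factor $e^{\otimes d}$ in the definition \eqref{eq:norm} of $\|(\rho,v)\|$ match. Past this accounting, the proof is Theorem~\ref{thm:sstabeq}, the slope formula, and elementary algebra.
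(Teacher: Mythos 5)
First, note that the paper itself gives no proof of Theorem~\ref{thm:stabeq}: it is imported verbatim from \cite[Corollary 3.22]{dr:3}. Your overall strategy --- apply the semistability equivalences of Theorem~\ref{thm:sstabeq} to the auxiliary pair $[e^{\otimes d}\otimes v^{\otimes k},w^{\otimes k+1}]$, and translate the resulting numerical and analytic conditions using multiplicativity of tensor-product Hermitian norms together with the slope formula of Theorem~\ref{thm:slope_pair} --- is the natural route and is surely the one taken in the cited reference. Your weight identity $\nu(\rho,[e^{\otimes d}\otimes v^{\otimes k},w^{\otimes k+1}])=(k+1)\nu(\rho,[v,w])-\nu(\rho,[v,e^{\otimes d}])$ is correct given Theorem~\ref{thm:slope_pair} and the definition \eqref{eq:norm}, and it delivers condition (2) on the nose.

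The analytic step, however, does not close the way you assert. Rearranging $d\log\|g\|+k\log\|g\cdot v\|-(k+1)\log\|g\cdot w\|\ge-\delta$ gives
$$\log\|g\cdot v\|-\log\|g\cdot w\|\ \ge\ -(k+1)\mi\bigl(d\log\|g\|-\log\|g\cdot v\|\bigr)-\tfrac{\delta}{k+1},$$
with a \emph{minus} sign in front of the norm term, not the plus sign appearing in condition (3). This is not cosmetic: because the representation has degree $d$, one has $\|g\cdot v\|\le C\|g\|^{d}$, so $d\log\|g\|-\log\|g\cdot v\|$ is bounded below on $G$, and a lower bound by $-(k+1)\mi$ times this quantity is essentially no stronger than semistability, whereas (3) is a genuine coercivity statement. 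The same sign tension is in fact already present in the statement as transcribed: by Theorem~\ref{thm:slope_pair}, the right-hand side of (2), namely $(k+1)\mi\nu(\rho,[v,e^{\otimes d}])$, is the slope along $\rho$ of $(k+1)\mi(\log\|\rho(z)\cdot v\|-d\log\|\rho(z)\|)$, i.e.\ the \emph{negative} of the slope of the right-hand side of (3); so (2) and (3) cannot both be read literally with \eqref{eq:norm} as written. A correct write-up must fix the convention (for stability to strengthen semistability the ``norm'' must be the nonnegative quantity, i.e.\ $\|(\rho,v)\|=\nu(\rho,[e^{\otimes d},v])$) and then verify which auxiliary pair in (1) actually yields that coercivity inequality. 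Your bookkeeping silently passes over this and claims that a rearrangement produces ``exactly the inequality of (3)'' when the algebra produces its sign-reversed, and inequivalent, counterpart; that is the gap to repair.
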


\subsection{Models and non-Archimedean functionals.}

We continue reviewing results from \cite{dr:3}. We first fix an ample line bundle on $X$, and we define:
\begin{definition}
    A \textit{model} of $(X,L)$ is the data of:
    \begin{enumerate}
        \item a flat, projective morphism $\cX\to\mathrm{Spec}\,\ring$ of schemes, where $\ring=\bbc[[t]]$;
        \item a relatively ample ($\bbq$-)line bundle $\cL$ on $\cX$;
        \item an isomorphism of the base change $(\cX_\field,\cL_\field)$ with the base change $(X_\field,L_\field)$.
    \end{enumerate}
    We say it is of \textit{exponent $r$} if $r\cL$ is relatively very ample.
\end{definition}
Analytically, a model corresponds to a family over a small disc, with fibres away from zero isomorphic to $(X,L)$. Test configurations, as involved in K-stability \cite{donaldson:scalar} but also in the search for numerical criteria for mixed Monge--Ampère equations \cite{gaochen}, are special cases of models whose isomorphisms away from zero are induced by a $\bbc^*$-action. In particular, deformations to the normal cones of subvarieties of $X$ are examples of test configurations, hence of models.

We recall the following:
\begin{proposition}[{\cite[Proposition 2.6]{dr:3}}]\label{prop:arcsmodels}
    Models of exponent $r$ for $(X,L)$ are in bijection with equivalence classes of arcs in $G$, where two arcs $\rho,\rho'\in G(\field)$ are equivalent iff $\rho\circ\rho'$ yields a point of $G(\ring)$.
\end{proposition}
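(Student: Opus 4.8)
The plan is to split the bijection into two mutually inverse constructions and thereby reduce it to the standard dictionary between full-rank $\ring$-lattices in a $\field$-vector space and cosets $G(\field)/G(\ring)$. Throughout, fix $r$ so that $rL$ is very ample, write $V=H^0(X,rL)$ and $N=\dim V$, let $\iota\colon X\hookrightarrow\bbp(V^\vee)$ be the resulting closed embedding, and take $G=\GL(V)$ (in the applications a product of such groups, which changes nothing below; see the remarks at the end).

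\emph{From an arc to a model.} Given an arc $\rho\in G(\field)$, I would let $\rho$ act on $\bbp(V^\vee)_\field$, carrying $\iota(X)_\field$ onto a closed subscheme $Z_\field$, and set $\cX$ to be the scheme-theoretic closure of $Z_\field$ inside $\bbp(V^\vee)\times_\bbc\Spec\ring$, together with $\cL:=\tfrac1r\,\cO(1)|_\cX$ (a $\bbq$-line bundle, $r\cL$ relatively very ample by construction). Since $\ring$ is a discrete valuation ring, the closure of a subscheme of the generic fibre is automatically flat over $\Spec\ring$, so $(\cX,\cL)$ is a model of exponent $r$, whose generic fibre is identified with $(X_\field,L_\field)$ via $\rho^{-1}$.

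\emph{From a model to a lattice.} Conversely, given a model $(\cX,\cL)$ of exponent $r$ with structure map $\pi\colon\cX\to\Spec\ring$, I would form $\cE:=\pi_*\cO_\cX(r\cL)$. Relative very ampleness together with flatness make the higher direct images vanish near the closed point, hence everywhere as $\Spec\ring$ is local, so by cohomology and base change $\cE$ is locally free -- hence free -- of rank $N$, and its formation commutes with base change to $\field$; the model's trivialisation over $\field$ then presents $\cE$ as a full-rank $\ring$-lattice inside $H^0(X_\field,rL_\field)=V_\field$. Picking a $\bbc$-basis of $V$ and any $\ring$-basis of $\cE$ turns this lattice into an element of $\GL_N(\field)=G(\field)$, well defined up to right multiplication by $\GL_N(\ring)=G(\ring)$, which is exactly the equivalence relation in the statement; and $\rho\mapsto\rho\cdot\ring^N$ recovers a lattice from a coset. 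To check these are inverse to one another I would use that the complete linear system $|r\cL|$ re-embeds $\cX$ into $\bbp(\cE^\vee)$ over $\Spec\ring$ as a flat family with generic fibre $\iota(X)_\field$; by uniqueness of the $\ring$-flat closure over a DVR, $\cX$ is then recovered from the lattice $\cE$ alone, and reading this off through a chosen $\ring$-basis of $\cE$ reproduces precisely the closure of $\rho\cdot\iota(X)_\field$ of the first construction.

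The genuinely non-formal input, and the step I expect to be the main obstacle, is the cohomology-and-base-change argument: one must be careful that $\pi_*\cO_\cX(r\cL)$ is free of the expected rank $N$ and compatible with base change (this is what makes the associated lattice well defined and of full rank), together with the uniqueness of the $\ring$-flat closure with prescribed generic fibre. Everything else is bookkeeping of identifications. One minor point worth recording: if $G$ is taken to be $\SL(V)$ (or the projective linear group) rather than $\GL(V)$, one must in addition trivialise, respectively projectivise, the determinant line $\det\cE$ and restrict to lattices of the corresponding normalisation; this is routine but should be mentioned.
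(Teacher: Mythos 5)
Your overall strategy is the right one and is essentially the argument behind the cited result: the paper quotes this proposition from \cite{dr:3} without reproducing a proof, and the intended argument is exactly the dictionary you describe, matching flat closures of $\rho\cdot\iota(X)_\field$ in $\bbp(V^\vee)\times_\bbc\Spec\ring$ on one side with full-rank $\ring$-lattices $\pi_*\cO_\cX(r\cL)\subset V_\field$, i.e.\ cosets of $G(\ring)$, on the other. Two points of bookkeeping: the equivalence relation in the statement should be read as $\rho^{-1}\rho'\in G(\ring)$ (as you implicitly do -- the relation as literally printed is not reflexive), and in this paper $G=\SL(H^0(X,rL))$, so your closing remark about normalising $\det\cE$ is a required step of the construction rather than an optional variant.

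The one step that is wrong as written is the cohomology-and-base-change argument -- precisely the step you single out as the crux. Relative very ampleness does not imply vanishing of higher direct images: a very ample line bundle of low degree on a high-genus curve already has nonzero $H^1$, so you cannot deduce $R^i\pi_*\cO_\cX(r\cL)=0$ near the closed point from very ampleness plus flatness, and without that vanishing (or a further twist by a large multiple) cohomology and base change at the closed fibre is not available. Fortunately the conclusion you need is both true and easier than the machinery you invoke: since $r\cL$ is a line bundle on $\cX$ and $\cX$ is flat over the discrete valuation ring $\ring$, the finitely generated $\ring$-module $\cE=H^0(\cX,r\cL)$ is torsion-free, hence free; and its formation commutes with the base change $\Spec\field\to\Spec\ring$ by flat base change for cohomology, which identifies $\cE\otimes_\ring\field$ with $H^0(X_\field,rL_\field)=V_\field$ and thereby fixes the rank as $N$. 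No statement about the special fibre is needed for the lattice to be well defined and of full rank. With that repair, the remainder of your argument (uniqueness of the $\ring$-flat closure with prescribed generic fibre, re-embedding by the complete linear system $|r\cL|$) goes through as you describe.
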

This suggests that the numerical criterion for semistability of pairs is closely related to a numerical criterion for models. This turns out to indeed be the case, as we will explain later. For the moment, let us introduce the algebraic (or \textit{non-Archimedean}) version of our functionals above:
\begin{definition}
    We keep the notation and setup of Section \ref{sec31}. Let $(\cX,\cL)$ be a model of $(X,L)$, with structure morphism $\pi:\cX\to \mathrm{Spec}\,\ring$. We also denote $\pi^*H$ by $H$, and $\pi^*L$ by $L$, and $V=(L^n)$. Finally, all intersection numbers are taken in the sense of \cite[Definition 2.11]{dr:3}. We define:
    \begin{align}
        E^\na(\cX,\cL)=&\,\frac{1}{V(n+1)}(\cL^{n+1});\\
        J_{H,\underline c}^\na(\cX,\cL)=&\,V\mi\sum_{j=1}^n\frac{c_j}{n+1-j}(H^j\cdot \cL^{n+1-j}) - C\cdot E^\na(\cX,\cL),\\
        J(\cX,\cL)=&\,V\mi(\cL\cdot L^n)- E^\na(\cX,\cL).
    \end{align}
\end{definition}

\begin{remark}
    The expressions for our functionals seem to closely be related to Deligne pairings. It turns out that this is indeed the case: the intersection numbers above in fact correspond to (differences of) Deligne pairings for non-Archimedean metrics on Berkovich spaces, see e.g.\ \cite[Remark 3.13]{bj:trivval}, \cite[Section 8.3]{boueri}, \cite[Section 4]{reb:3}.
\end{remark}

We may finally state the \textit{slope formul\ae}\, from \cite[Theorem 4.5]{dr:3}, in the form that we will use here. A version of such results for test configurations appears in \cite{bhj:asymptotics}. 

Let us first set the following notation. Let $h_m$ be the Fubini--Study metric on $mL$ induced by the embedding by sections of a very ample power $mL$, with curvature form $\omega_m$. Given $g\in G:=SL(H^0(X,mL))$, there exists $\phi_g$ (unique up to constants) such that $g^*\omega_m=\omega_m+dd^c\phi_g$. We write $\cH_m:=\{m\mi \phi_g,\,g\in G\}$, a subset of the space of Kähler potentials $\cH_L$, which we call the set of \textit{Fubini--Study potentials} in degree $m$.
\begin{theorem}\label{thm:slope}
    Let $m$ be such that $mL$ is very ample. Let $\rho$ be an arc in $G:=SL(H^0(X,mL))$, and let $(\cX_\rho,\cL_\rho)$ be the model of exponent $m$ it induces by Proposition \ref{prop:arcsmodels}. We see $\rho$ as a meromorphic map $\bbd^*\ni z\mapsto \rho(z)\in G$. Then, as $|z|\to 0$,
    \begin{align}
        E(\phi_{\rho(z)})=&\,E^\na(\cX_\rho,\cL_\rho)\log|z|\mi + O(1);\\
        J_{\chi,\underline c}(\phi_{\rho(z)})=&\,J_{H,\underline c}^\na(\cX_\rho,\cL_\rho)\log|z|\mi + O(1);\\
        J(\phi_{\rho(z)})=&\,J^\na(\cX_\rho,\cL_\rho)\log|z|\mi + O(1).
    \end{align}
\end{theorem}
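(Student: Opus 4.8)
The plan is to reduce all three identities to the slope formula for norms of vectors, Theorem \ref{thm:slope_pair}, by first rewriting the Archimedean functionals as differences of norms of Chow-type forms (the content of Theorem \ref{thm:lognorm}, itself a consequence of Theorem A), and then matching the resulting weights with the non-Archimedean intersection numbers via the arc/model correspondence of Proposition \ref{prop:arcsmodels}. By Remark \ref{rem:reduction} I may first assume $\chi$ is a rescaled Fubini--Study form induced by the embedding of $X$ by sections of a very ample power $pH$, and take the reference $\omega_0=\omega_m$ to be the Fubini--Study form of $mL$, so that $\phi_{\rho(z)}$ is $m\mi$ times the potential of $\rho(z)^*\omega_m$ for the arc $\rho$ valued in $G=SL(H^0(X,mL))$.

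Next, using the rewriting of Section \ref{sec31}, $E$, $J_{\chi,\underline c}$ and $J$ are finite $\bbq$-linear combinations of differences of mixed Deligne pairing metrics $\langle\chi^{(j)},\omega_{\phi_g}^{(n+1-j)}\rangle_X-\langle\chi^{(j)},\omega_0^{(n+1-j)}\rangle_X$ in which only the $L$-slots depend on $g$. Applying Theorem \ref{thm:isometry} to the tuple $(pH,\dots,pH,mL,\dots,mL)$, with $G$ acting diagonally on the $mL$-slots and trivially on the $pH$-slots as in Remark \ref{rem:action}, and using $R_{\underline L}(g\cdot X)=g\cdot R_{\underline L}(X)$, each such pairing metric at $\omega_{\phi_g}$ equals $\log\norm[g\cdot R]$ for the associated mixed Chow-type form, up to an additive constant independent of $g$. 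Collecting the terms produces the vectors $R_{\chi,\underline c}(X,m)$, $R(X,m)$ of Theorem \ref{thm:lognorm} and an identity, valid for all $g\in G$ up to $O(1)$,
$$J_{\chi,\underline c}(\phi_g)=\log\norm[g\cdot R_{\chi,\underline c}(X,m)]-\log\norm[g\cdot R(X,m)]+O(1),$$
and likewise for $E$ and $J$ (the latter two being exactly the functionals already handled in \cite{dr:3}). Writing $\rho$ as a meromorphic map $\bbd^*\ni z\mapsto\rho(z)\in G$ and invoking Theorem \ref{thm:slope_pair} for the pair $[R_{\chi,\underline c}(X,m),R(X,m)]$ then gives, as $|z|\to 0$,
$$J_{\chi,\underline c}(\phi_{\rho(z)})=\nu(\rho,[R_{\chi,\underline c}(X,m),R(X,m)])\log|z|\mi+O(1),$$
and analogously for $E$ and $J$.

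The remaining task is to identify $\nu(\rho,[R_{\chi,\underline c},R])$ with $J_{H,\underline c}^\na(\cX_\rho,\cL_\rho)$, where $(\cX_\rho,\cL_\rho)$ is the model of exponent $m$ attached to $\rho$ by Proposition \ref{prop:arcsmodels}. By multilinearity of the mixed Chow construction --- each slot contributing additively to the weight, mirroring the multilinearity of Deligne pairings --- it suffices to show the $\rho$-weight of the mixed Chow form of $(pH,\dots,pH,mL,\dots,mL)$ is the normalised intersection number $(H^j\cdot\cL_\rho^{n+1-j})$ on $\cX_\rho$. This is the arc version of the classical identification of the weight of a Chow point under a one-parameter degeneration with an intersection number on the total space of the induced test configuration: one spreads the incidence-variety construction of Section \ref{sec2} over $\ring$ using the $\ring$-point $\rho$, obtains a family of mixed Chow hypersurfaces whose special fibre is that of the central fibre of $\cX_\rho$, and reads $\nu$ off as the order of vanishing of the mixed Chow form, which the projection formula for Deligne pairings over $\ring$ (equivalently, \cite[Theorem 4.5]{dr:3} together with the identification of the non-Archimedean functionals with Deligne pairings of non-Archimedean metrics) matches with the stated intersection number in the sense of \cite[Definition 2.11]{dr:3}. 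For $E$ and $J$ this is precisely \cite[Theorem 4.5]{dr:3}.

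The main obstacle is this last step: upgrading the familiar $\bbc^*$-equivariant slope formula for Chow weights to the case of a general arc, i.e.\ a general, possibly non-normal, model $\cX_\rho$. One has to check that the mixed Chow construction behaves well in families over $\ring$ --- flatness of the relevant incidence varieties, and compatibility of the Deligne pairing projection-formula isomorphisms with base change to the special fibre --- so that the order of vanishing of the mixed Chow form is genuinely computed by intersection theory on $\cX_\rho$. By comparison, the reduction of the reference forms and the bookkeeping turning functionals into differences of norms are routine, and the slope estimate itself is immediate from Theorem \ref{thm:slope_pair}.
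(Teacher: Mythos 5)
There is a genuine gap, and it sits exactly where you flag the ``main obstacle.'' Note first that the paper does not prove this theorem at all: it is stated as a quotation of \cite[Theorem 4.5]{dr:3}, where the slope formula is established directly for functionals given by intersection numbers on models (the $J_{\chi,\underline c}$ case being a linear combination of terms $(H^j\cdot\cL^{n+1-j})$ of the same shape as those treated there). Your proposal instead routes the proof through the Chow-form representation: Theorem \ref{thm:lognorm} turns $J_{\chi,\underline c}(\phi_{\rho(z)})$ into a difference of log-norms, Theorem \ref{thm:slope_pair} turns that into $\nu(\rho,[R_{\chi,\underline c}(X,m),R(X,m)])\log|z|\mi+O(1)$, and everything then hinges on identifying this arc-weight of the mixed Chow point with $J^\na_{H,\underline c}(\cX_\rho,\cL_\rho)$. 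That identification \emph{is} the theorem once the (routine) reductions are made, and you do not prove it: you assert it as ``the arc version of the classical identification of the Chow weight with an intersection number,'' sketch a spreading-out of the incidence-variety construction over $\ring$, and explicitly defer the flatness and base-change verifications. For a general arc the model $\cX_\rho$ need not be normal or have reduced central fibre, and the order of vanishing of the mixed Chow form along the family is precisely the quantity whose computation requires work; nothing in Section \ref{sec2} (which lives over a point) supplies it.

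Worse, the justification you offer in parentheses --- that the matching follows from ``\cite[Theorem 4.5]{dr:3} together with the identification of the non-Archimedean functionals with Deligne pairings'' --- invokes the very statement being proven, so the argument is either circular or collapses back to the paper's one-line citation, in which case the Chow-form detour is superfluous. Indeed the paper's logic runs in the opposite direction: it takes Theorem \ref{thm:slope} from \cite{dr:3} as an input and, in the Corollary of the final section, \emph{deduces} the equality of the pair-weight $\nu(\rho,[R_{\chi,\underline c},R])$ with the non-Archimedean functional by comparing Theorems \ref{thm:slope_pair} and \ref{thm:slope}. (Two smaller points: for $E$ you need the single-vector slope formula and the arc version of Mumford's Chow-weight computation, again unproven; and since $C$ need not be an integer, the pair $[R_{\chi,\underline c},R^{\otimes C}]$ must be handled by clearing denominators before Theorem \ref{thm:slope_pair} applies.)
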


\subsection{$J$-functionals as log-norm functionals.}

Throughout, by Theorem \ref{thm:colsze}(2), we may as in Remark \ref{rem:reduction} without loss of generality assume that $\chi$ is the Fubini--Study metric associated to the embedding of $X$ into $\bbp H^0(X,pH)$ for a $p$ such that $pH$ is very ample. For simplicity of notation we will therefore assume $H$ itself to be very ample. Let $m$ be such that $mL$ is very ample, and let $h$ be the induced Fubini--Study metric on $mL$ with Kähler form $\omega$. We consider the Deligne pairing
$$\langle (mL)^{(n+1-i)},H^{(i)}\rangle_X.$$
Let $C_i=C_{((mL)^{(n+1-i)},H^{(i)})}$ be the associated Chow hypersurface, $\cC_i$ the mixed Chow line, and $[R_i(X,m)]$ the mixed Chow point. Given $g\in SL(H^0(X,mL))$, we have two metrics on the line $\langle (mL)^{(n+1-i)},H^{(i)}\rangle_X$, 
$$\langle h^{(n+1-i)},h_\chi^{(i)}\rangle_X\text{ and }\langle (g^*h)^{(n+1-i)},h_\chi^{(i)}\rangle_X.$$
Through the isomorphism Theorem \ref{thm:mixedisomorphism} and Remark \ref{rem:hypersurface}, we can equivariantly identify a section of this Deligne pairing as a section $s$ of the hyperplane bundle over the mixed Chow point $[R_i(X,m)]$, which we will denote slightly abusively by $R_i(X,m)$. We compute the value of the difference of metrics at a such $s$:
\begin{equation}
    \log\frac{|R_i(X,m)|_{\langle (g^*h)^{(n+1-i)},h_\chi^{(i)}\rangle_X}}{|R_i(X,m)|_{\langle h^{(n+1-i)},h_\chi^{(i)}\rangle_X}}=\log\frac{\norm[g\cdot R_i(X,m)]_{h_i}}{\norm[R_{i}(X,m)]_{h_i}}
\end{equation}
using Theorem \ref{thm:isometry} and Remark \ref{rem:hypersurface}; where $\norm_{h_i}$ is the norm in \eqref{eq:norm2}, having denoted by $h_i$ the metric $h_{\underline L}=h_{(mL^{(n+1-i},H^{(i)})}$ in \eqref{eq:metric} (adequately rescaled to fit with the universal constant in Theorem \ref{thm:isometry}). We thus obtain:
\begin{theorem}\label{thm:lognorm}
    Let $R_{\chi,\underline c}(X,m):=\otimes_{i=1}^n R_i(X,m)^{\otimes c_i}\in \otimes_{i=1}^n H^0(C_i,\cC_i)$, and let $\norm_{\chi,\underline c}$ denote the tensor product norm of the $\norm_{\omega_i}$ as above. Let $R(X,m)$ be the Chow point of $(X,mL)$, and let us simply denote by $\norm$ the associated norm. Given $\phi_g\in \cH_m$, we then have that
    $$J_{\chi,\underline c}(\phi_g)=\log \frac{\norm[g\cdot R_{\chi,\underline c}(X,m)]_{\chi,\underline c}}{\norm[R_{\chi,\underline c}(X,m)]_{\chi,\underline c}}-C\log\frac{\norm[g\cdot R(X,m)]}{\norm[R(X,m)]}.$$
\end{theorem}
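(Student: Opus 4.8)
The plan is to substitute the Deligne-pairing formula for $J_{\chi,\underline c}$ from Section \ref{sec31} into the equivariant isometry of Theorems \ref{thm:mixedisomorphism} and \ref{thm:isometry}, one Deligne factor at a time; the per-factor identity is exactly the display appearing immediately before the statement, and what is left is to repackage the resulting sum of log-norm differences as a single one.

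Fix $m$ with $mL$ very ample, let $h$ be the Fubini--Study metric on $mL$ with curvature $\omega_m=m\omega_0$, and take $\phi_g\in\cH_m$, so that $g^*h$ has curvature $m\omega_{\phi_g}$ for $g\in G:=\SL(H^0(X,mL))$. Start from
$$J_{\chi,\underline c}(\phi_g)=V\mi\sum_{j=1}^n\frac{c_j}{n+1-j}\bigl(\langle\chi^{(j)},\omega_{\phi_g}^{(n+1-j)}\rangle_X-\langle\chi^{(j)},\omega_0^{(n+1-j)}\rangle_X\bigr)-C\cdot E(\phi_g).$$
For each $j$, I would first use multilinearity and the iterated change-of-metric formula \eqref{eq:changemetric} to identify $\langle\chi^{(j)},\omega_{\phi_g}^{(n+1-j)}\rangle_X-\langle\chi^{(j)},\omega_0^{(n+1-j)}\rangle_X$ — up to the scaling $m^{n+1-j}$ produced by passing from $L$ to $mL$ and up to the fixed sign conventions — with the difference of the two Deligne metrics $\langle h_\chi^{(j)},(g^*h)^{(n+1-j)}\rangle_X$ and $\langle h_\chi^{(j)},h^{(n+1-j)}\rangle_X$ on $\langle H^{(j)},(mL)^{(n+1-j)}\rangle_X$, evaluated at a fixed Deligne section; this is the left-hand side of the display preceding the statement. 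I would then apply Theorems \ref{thm:mixedisomorphism}, \ref{thm:isometry} and Remark \ref{rem:hypersurface} to the tuple $((mL)^{(n+1-j)},H^{(j)})$: this carries that Deligne section to a section over the mixed Chow point $[R_j(X,m)]$, the Deligne metric to the norm \eqref{eq:norm2} attached to $h_j=h_{\underline L}$, and — by the $G$-equivariance of the isomorphism (Remark \ref{rem:action}) and of the isometry (the universal constant of Theorem \ref{thm:isometry} being a scalar) — the replacement of $h$ by $g^*h$ to the replacement of $R_j(X,m)$ by $g\cdot R_j(X,m)$. Since any additive shift of $\log\norm$, in particular that universal constant, cancels in a ratio, this yields the display, i.e.\ $\langle\chi^{(j)},\omega_{\phi_g}^{(n+1-j)}\rangle_X-\langle\chi^{(j)},\omega_0^{(n+1-j)}\rangle_X=\text{const}_j\cdot\log\frac{\norm[g\cdot R_j(X,m)]_{h_j}}{\norm[R_j(X,m)]_{h_j}}$ with $\text{const}_j$ a topological, $X$-independent number; absorbing the resulting factor $\tfrac{m^{n+1-j}}{(n+1-j)V}\cdot\text{const}_j\mi$ into a logarithmic rescaling of the norm on $\cC_j$, denoted $\norm_{\omega_j}$ (the rescaling already signalled in the paragraph before the statement), gives $\frac{1}{(n+1-j)V}\bigl(\langle\chi^{(j)},\omega_{\phi_g}^{(n+1-j)}\rangle_X-\langle\chi^{(j)},\omega_0^{(n+1-j)}\rangle_X\bigr)=\log\frac{\norm[g\cdot R_j(X,m)]_{\omega_j}}{\norm[R_j(X,m)]_{\omega_j}}$.

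The energy term $E(\phi_g)=\frac{1}{V(n+1)}\bigl(\langle\omega_{\phi_g}^{(n+1)}\rangle_X-\langle\omega_0^{(n+1)}\rangle_X\bigr)$ is treated identically, now using Theorem A with all $n+1$ factors equal to $mL$ — the classical case due to Zhang and Kapadia, in which $\langle(mL)^{(n+1)}\rangle_X$ is equivariantly isometric to the Chow line of $(X,mL)$ — to get $E(\phi_g)=\log\frac{\norm[g\cdot R(X,m)]}{\norm[R(X,m)]}$ with $\norm$ the correspondingly normalised Chow norm. It then remains to assemble. With $R_{\chi,\underline c}(X,m)=\otimes_{j=1}^n R_j(X,m)^{\otimes c_j}$ and $\norm_{\chi,\underline c}$ the associated tensor-product norm, the diagonality of the $G$-action on $\otimes_j\cC_j^{\otimes c_j}$ gives $\log\frac{\norm[g\cdot R_{\chi,\underline c}(X,m)]_{\chi,\underline c}}{\norm[R_{\chi,\underline c}(X,m)]_{\chi,\underline c}}=\sum_{j=1}^n c_j\log\frac{\norm[g\cdot R_j(X,m)]_{\omega_j}}{\norm[R_j(X,m)]_{\omega_j}}$, which by the previous display equals $V\mi\sum_{j=1}^n\frac{c_j}{n+1-j}\bigl(\langle\chi^{(j)},\omega_{\phi_g}^{(n+1-j)}\rangle_X-\langle\chi^{(j)},\omega_0^{(n+1-j)}\rangle_X\bigr)$; subtracting $C\cdot E(\phi_g)=C\log\frac{\norm[g\cdot R(X,m)]}{\norm[R(X,m)]}$ yields the claimed formula.

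The structural steps are each either quoted from Section \ref{sec2} or identical to the display preceding the statement, so the only genuinely delicate point is the bookkeeping: reconciling the additive (curvature-form) and multiplicative (metric) conventions and the exponent $2$ in \eqref{eq:norm2}; tracking the multilinearity factors $m^{n+1-j}$ and folding them, together with the factors $\tfrac{1}{(n+1-j)V}$, into the renormalised norms $\norm_{\omega_j}$ (checking in particular that the normalisation constants come out with the right sign, so the rescaling is legitimate); and, above all, confirming that the $G$-action pulling back the Fubini--Study metric on the Deligne side is intertwined by the isometry of Theorem \ref{thm:mixedisomorphism} with the diagonal action on the mixed Chow form of Remark \ref{rem:action}, and with the correct handedness, so that $\langle\,\cdot\,,(g^*h)^{(\bullet)}\rangle$ versus $\langle\,\cdot\,,h^{(\bullet)}\rangle$ becomes $g\cdot R_j(X,m)$ versus $R_j(X,m)$ and not $g\mi\cdot R_j(X,m)$ versus $R_j(X,m)$. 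Once the conventions are pinned down all of this is routine, and the real content of the theorem is the identification of the single pair $(R_{\chi,\underline c}(X,m),R(X,m))$ as the natural object against which to run stability of pairs.
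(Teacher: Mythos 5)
Your proposal is correct and follows essentially the same route as the paper: the paper's own proof consists precisely of the displayed computation preceding the statement, namely identifying each difference $\langle h_\chi^{(j)},(g^*h)^{(n+1-j)}\rangle_X$ versus $\langle h_\chi^{(j)},h^{(n+1-j)}\rangle_X$ with a log-ratio of Chow-type norms via Theorems \ref{thm:mixedisomorphism} and \ref{thm:isometry} and Remark \ref{rem:hypersurface}, then summing against the coefficients $c_j/(n+1-j)$ and absorbing the universal and topological constants into the rescaled norms. Your additional bookkeeping of the factors $m^{n+1-j}$, the sign of the rescaling, and the handedness of the $G$-action is exactly the content the paper compresses into the phrase ``adequately rescaled,'' so there is nothing genuinely different here.
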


\subsection{Proof of the main result.}

As a corollary of Theorem \ref{thm:lognorm} and the above results on stability of pairs and slope formul\ae, we first have:
\begin{corollary}
    The following are equivalent:
    \begin{enumerate}
        \item $J_{\chi,\underline c}$ is coercive on $\cH_m$;
        \item the pair $(R_{\chi,\underline c}(X,m),R(X,m))$ is stable;
        \item $J^\na_{\chi,\underline c}(\cX,\cL)>\varepsilon J^\na(\cX,\cL)$ for all models of exponent $m$.
    \end{enumerate}
\end{corollary}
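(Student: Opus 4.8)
The plan is to read both equivalences off Theorem~\ref{thm:lognorm} combined with the equivalent (analytic / numerical / intrinsic) formulations of stability of pairs supplied by Theorem~\ref{thm:stabeq} (and \ref{thm:sstabeq}). Write $G=\SL(H^0(X,mL))$, and let $v=R_{\chi,\underline c}(X,m)$, $w=R(X,m)$ be the two vectors of the pair, the norm on $w$ rescaled so as to absorb the factor $C$ of Theorem~\ref{thm:lognorm}, so that for all $g\in G$
\[
\log\frac{\norm[g\cdot v]}{\norm[v]}-\log\frac{\norm[g\cdot w]}{\norm[w]}=J_{\chi,\underline c}(\phi_g).
\]
The one genuinely new input is a \emph{dictionary} identifying, up to multiplicative and additive constants depending only on $m$ (i.e.\ on $h^0(X,mL)$), the ``norm term'' $d\log\norm[g]-\log\norm[g\cdot v]$ occurring in the analytic criterion Theorem~\ref{thm:stabeq}(3) with the reference functional $J(\phi_g)$, and, correspondingly, the quantity $\norm[(\rho,v)]=\nu(\rho,[v,e^{\otimes d}])$ of \eqref{eq:norm} with $J^{\na}(\cX_\rho,\cL_\rho)$. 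This is the finite-dimensional (resp.\ non-Archimedean) shadow of the identity ``minimum norm $=J^{\na}$''. One inequality is easy: $d\log\norm[g]-\log\norm[g\cdot v]\ge -O(1)$ for Paul's choice of $d$, and it is bounded above by a multiple of $J(\phi_g)+O(1)$ via the change-of-metric formul\ae. The reverse uses the elementary bound $\lambda_1\le(N-1)(-\lambda_N)$ for the weights $\lambda_1\ge\dots\ge\lambda_N$, $\sum_i\lambda_i=0$, of an arc (or one-parameter subgroup) in $\SL(N)$, which controls the maximal ($L^\infty$-type) quantity $\log\norm[g]$ by a fixed multiple of the minimum-norm ($L^1$-type) quantity; equivalently, one quotes the statement about $\norm[(\rho,v)]$ directly from \cite{dr:3}.

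Granting the dictionary, $(1)\Leftrightarrow(2)$ is formal. By Theorem~\ref{thm:stabeq}, the pair $(v,w)$ is $k\mi$-stable iff there is $\delta$ with $\log\norm[g\cdot v]-\log\norm[g\cdot w]\ge (k+1)\mi\big(d\log\norm[g]-\log\norm[g\cdot v]\big)-\delta$ for all $g\in G$; by the display above and the dictionary this reads $J_{\chi,\underline c}(\phi_g)\ge\varepsilon\,J(\phi_g)-\delta'$ on $\cH_m$ for a positive $\varepsilon$ proportional to $(k+1)\mi$, which is exactly coercivity of $J_{\chi,\underline c}$ on $\cH_m$. Since (1) asks only for \emph{some} $\varepsilon>0$ and (2) for \emph{some} $k$, the two are equivalent. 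For $(2)\Leftrightarrow(3)$ I would instead use the numerical criterion of Theorem~\ref{thm:stabeq}: $(v,w)$ is $k\mi$-stable iff $\nu(\rho,[v,w])\ge(k+1)\mi\norm[(\rho,v)]$ for all arcs $\rho\in G(\field)$. By Proposition~\ref{prop:arcsmodels} these arcs are in bijection with the models $(\cX_\rho,\cL_\rho)$ of exponent $m$; combining the slope formula for pairs (Theorem~\ref{thm:slope_pair}) applied to $[v,w]$ and to $[v,e^{\otimes d}]$ with the Archimedean slope formul\ae\ (Theorem~\ref{thm:slope}) at $g=\rho(z)$ and with Theorem~\ref{thm:lognorm} yields $\nu(\rho,[v,w])=J^{\na}_{\chi,\underline c}(\cX_\rho,\cL_\rho)$ and, by the non-Archimedean form of the dictionary, $\norm[(\rho,v)]=(\mathrm{const})\,J^{\na}(\cX_\rho,\cL_\rho)$. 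Numerical stability with parameter $k$ then says precisely that $J^{\na}_{\chi,\underline c}(\cX,\cL)>\varepsilon\,J^{\na}(\cX,\cL)$ for every model of exponent $m$, which is (3).

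The main obstacle is the dictionary itself, i.e.\ the comparison between $d\log\norm[g]-\log\norm[g\cdot v]$ (resp.\ $\norm[(\rho,v)]$) and $J$ (resp.\ $J^{\na}$): $J$ and $J^{\na}$ are ``$L^1$'' / minimum-norm quantities, whereas $\log\norm[g]$ and $\norm[(\rho,v)]$ are maximal (``$L^\infty$'') quantities, and the two are comparable only with constants that degenerate as $m\to\infty$ — which is harmless since $m$ is fixed. Making this precise (via the weight-spread estimate together with the minimum-norm interpretation of $J^{\na}$ as in \cite{dr:3,bhj:asymptotics}, or by importing the statement about $\norm[(\rho,v)]$ from \cite{dr:3}) is the only step requiring real work; the remainder is bookkeeping with the slope formul\ae\ and the already-available equivalences among the analytic, numerical and intrinsic formulations of stability of pairs.
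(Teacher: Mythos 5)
Your proposal follows essentially the same route as the paper: (1)$\Leftrightarrow$(2) by translating the analytic stability criterion of Theorem \ref{thm:stabeq}(3) through Theorem \ref{thm:lognorm}, and (2)$\Leftrightarrow$(3) by matching the numerical criterion against the slope formul\ae\ (Theorems \ref{thm:slope_pair} and \ref{thm:slope}) via the arc--model correspondence of Proposition \ref{prop:arcsmodels}. The one step you single out as requiring real work --- the ``dictionary'' identifying $d\log\|g\cdot\|-\log\|g\cdot v\|$ (resp.\ $\|(\rho,v)\|$) with $J(\phi_g)$ (resp.\ $J^\na$) --- is dispatched in the paper by directly citing Tian's result \cite[Lemma 3.2]{tian:stabpairs} rather than by your weight-spread estimate, but either route suffices since all three statements only require comparability up to multiplicative and additive constants at fixed $m$.
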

\begin{proof}
    (1)$\Leftrightarrow$(2) follows from rewriting $J_{\chi,\underline c}(\phi_g)$ using Theorem \ref{thm:lognorm}, and $J(\phi_g)$ using Tian's result \cite[Lemma 3.2]{tian:stabpairs}.

    (2)$\Leftrightarrow$(3), follows from Theorem \ref{thm:stabeq} and comparing the slope formul\ae\,Theorems \ref{thm:slope_pair} and \ref{thm:slope}.
\end{proof}

We now conclude.

\begin{theorem}\label{thm:b}
    The following are equivalent:
    \begin{enumerate}
        \item there exists a solution to \eqref{eq:eqn2};
        \item there exists $k>0$ such that, for all $m$, the pair $(R_{\chi,\underline c}(X,m),R(X,m))$ is $\varepsilon=k\mi$-stable.
    \end{enumerate}
\end{theorem}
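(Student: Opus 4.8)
The plan is to chain together the three results already at our disposal: the Collins--Szekelyhidi coercivity-vs-solvability equivalence (Theorem \ref{thm:colsze}), the log-norm identity (Theorem \ref{thm:lognorm}) together with the analytic characterisation of stability of pairs (Theorem \ref{thm:stabeq}), and the non-Archimedean/model dictionary (Proposition \ref{prop:arcsmodels}, the slope formul\ae\ of Theorems \ref{thm:slope_pair} and \ref{thm:slope}). The statement of Theorem \ref{thm:b} is obtained from the preceding Corollary by removing the dependence on a fixed exponent $m$, so the real content beyond the Corollary is a \emph{uniformity in $m$} argument. I would structure the proof as two implications.

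For $(1)\Rightarrow(2)$: assume \eqref{eq:eqn2} has a solution. By Theorem \ref{thm:colsze}(1) there exist $\varepsilon>0$, $\delta\in\bbr$ with $J_{\chi,\underline c}>\varepsilon J-\delta$ on all of $\cH$, hence on each $\cH_m$, \emph{with the same $\varepsilon,\delta$ independent of $m$}. Fix $k$ with $k^{-1}\le\varepsilon$. Rewriting via Theorem \ref{thm:lognorm}, the inequality $J_{\chi,\underline c}(\phi_g)\ge\varepsilon J(\phi_g)-\delta$ becomes, after using Tian's identity for $J(\phi_g)$ as in the Corollary's proof, precisely the analytic-stability inequality of Theorem \ref{thm:stabeq}(3) for the pair $(R_{\chi,\underline c}(X,mL),R(X,mL))$ in $G=SL(H^0(X,mL))$, with the slack $\delta$ and exponent $k^{-1}$. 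Here one must be slightly careful that the quantity $d\log\|g\|-\log\|g\cdot v\|$ appearing in Theorem \ref{thm:stabeq}(3) matches (up to a constant) $J(\phi_g)$; this is exactly Tian's $\SL$-norm comparison \cite{tian:stabpairs} invoked in the Corollary. Since $\varepsilon,\delta,k$ do not depend on $m$, we conclude $\varepsilon$-stability for all $m$ with a single $k$.

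For $(2)\Rightarrow(1)$: conversely, suppose there is $k$ with $(R_{\chi,\underline c}(X,mL),R(X,mL))$ $\varepsilon$-stable, $\varepsilon=k^{-1}$, for all $m$. By Theorem \ref{thm:stabeq}(3) applied with $g$ ranging over $G_m=SL(H^0(X,mL))$ and Theorem \ref{thm:lognorm}, we get for each $m$ some $\delta_m$ with $J_{\chi,\underline c}>\varepsilon J-\delta_m$ on $\cH_m$. The danger is that $\delta_m$ blows up as $m\to\infty$, so a priori one only gets coercivity fibrewise and cannot immediately deduce it on $\cup_m\cH_m$, let alone on $\cH$. The remedy is to go through the non-Archimedean side: by the slope formul\ae\ (Theorems \ref{thm:slope_pair}, \ref{thm:slope}) and the arc/model correspondence (Proposition \ref{prop:arcsmodels}), $\varepsilon$-stability for exponent $m$ is equivalent to $J^\na_{H,\underline c}(\cX,\cL)\ge\varepsilon J^\na(\cX,\cL)$ for all models $(\cX,\cL)$ of exponent $m$ — this is the $(2)\Leftrightarrow(3)$ of the Corollary. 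One then appeals to the known uniform-versus-pointwise result for the $J$-functional, or more directly to the equivalence established by Datar--Pingali \cite{datarpingali} and Song \cite{song:nakai}: the non-Archimedean inequality for all models (equivalently, the Nakai--Moishezon-type positivity of $c_1(L)$ relative to $\chi$ and $\underline c$), together with Collins--Szekelyhidi's openness/properness results \cite{colsze:jflow}, implies solvability of \eqref{eq:eqn2}. Concretely: $\varepsilon$-stability for all $m$ $\Rightarrow$ $J^\na_{H,\underline c}>\varepsilon J^\na$ on all models $\Rightarrow$ (by \cite{datarpingali,song:nakai} and the reduction of Remark \ref{rem:reduction}) the Nakai--Moishezon criterion holds $\Rightarrow$ a solution exists.

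The main obstacle is the $m$-uniformity in the direction $(2)\Rightarrow(1)$: one cannot simply say ``$\varepsilon$-stable for all $m$ $\Rightarrow$ $J_{\chi,\underline c}$ coercive on $\cH$'' by a naive union, because the Archimedean error term $\delta_m$ is not controlled. The clean way around this — and the reason the non-Archimedean package is developed in the earlier sections — is to pass to models, where the inequality $J^\na_{H,\underline c}\ge\varepsilon J^\na$ is \emph{scale-free} (homogeneous under rescaling $\cL$, hence independent of the exponent), and then quote the Gao Chen / Datar--Pingali / Song resolution of the Nakai--Moishezon conjecture for inverse $\sigma_k$-equations to return to the existence of an analytic solution. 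A secondary technical point is checking that the arcs in $SL(H^0(X,mL))$ indeed produce \emph{all} models of exponent $m$ (Proposition \ref{prop:arcsmodels}) and that deformation to the normal cone exhausts enough of them to make the Nakai--Moishezon comparison go through; but this is handled by the cited results.
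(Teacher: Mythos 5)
Your proposal is correct and follows essentially the same route as the paper: the forward direction uses the uniform coercivity from Theorem \ref{thm:colsze}(1) fed through the Corollary (Theorem \ref{thm:lognorm} plus Tian's identity), and the converse passes to the non-Archimedean inequality $J^\na_{H,\underline c}\geq\varepsilon J^\na$ on all models, restricts to deformations to the normal cone, and invokes Lejmi--Szekelyhidi and Datar--Pingali to recover the Nakai--Moishezon criterion and hence a solution. Your explicit identification of the $m$-uniformity of $\delta_m$ as the real issue, and of the scale-free model inequality as the way around it, is exactly the point the paper's proof is implicitly organised around.
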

\begin{proof}
    (1) implies that $J_{\chi,\underline c}$ is coercive by Theorem \ref{thm:colsze}(1). This implies that $J_{\chi,\underline c}$ is coercive on each $\cH_m$ (with uniform $\varepsilon$). By the above corollary, this gives (2).
    
    Assume now (2) to hold. By the above corollary again, this is equivalent to coercivity of $J^\na_{\chi,\underline c}$ on all models of exponent $m$ with uniform $\varepsilon$ -- equivalently, it is coercive on all models. Thus it is in particular coercive on all test configurations, hence on deformations to the normal cone. By Lejmi--Szekelyhidi \cite[Proposition 13]{sze:lejmi} and the discussion of Section 7 therein, this implies the (uniform version of the) numerical criterion of Datar--Pingali \cite[Theorem 1.1(3)]{datarpingali}, which by \cite[Theorem 1.1]{datarpingali} implies existence of a solution, that is: (1).
\end{proof}

\subsection{General structure and comments.}

The author wants to mention the possible generalisation of the results of this article to equations involving higher-rank classes. Namely, let $X\to S$ be (say) a holomorphic submersion, $E_i\to X$ be vector bundles of respective rank $r_i+1$, for all $i\leq n\leq d+1$, where $d$ is the relative dimension. Let $\bbp:=\bbp E_1\times_X \dots\times_X\bbp E_n$, and for each $i$ with projection $p_i:\bbp\to\bbp E_i$ set $L_i:=p_i^*\cO_{\bbp E_i}(1)$. Given $k_i$ positive integers with $\sum_i k_i=d+1$, Elkik \cite{elkik:1} defines a Deligne pairing for Segre classes as
$$\langle s_{k_1}(E_1),\dots,s_{k_n}(E_n)\rangle_{X/S}:=\langle (L_1)^{(r_1+k_1)},\dots,(L_n)^{(r_n+k_n)}\rangle_{\bbp/S}.$$
One then obtains pairings for Chern classes by linearity and their usual expression in terms of Segre classes \cite[Section 7.1.1]{eriksson:freixas}. At the very least, our isomorphism result should extend to this setting. Given an equation involving such classes, if there exists a functional $F$ whose coercivity is (say) equivalent to existence of solutions, and which may be written as a Deligne pairing in this higher-rank sense, then it is clear that the slope formula in our article also holds for $F$ (the definition of the non-Archimedean version of $F$ being similar). Which formulation of the isometry result is expected is less clear. Solving such problems is likely to help proving results related to Z-critical equations as in \cite{dervan:zcrit1,dervan:zcrit2,dervan:zcrit3}, higher Mabuchi functionals \cite{paul:higherenergies,westrich}, and Gao Chen's programme for the Hodge conjecture \cite{gaochen:hodge}.

\bibliographystyle{alpha}
\bibliography{bib}

\end{document}